\documentclass[11pt,a4paper]{article}
\usepackage[T1]{fontenc}
\usepackage[bitstream-charter]{mathdesign}
\usepackage[latin1]{inputenc}	
\usepackage{amsmath}									
\usepackage{amsthm}
\usepackage{amsbsy}
\usepackage[math]{easyeqn}
\usepackage{esint}
\usepackage{graphicx}

\usepackage{xcolor}
\definecolor{bl}{rgb}{0.0,0.2,0.6} 

\usepackage[authoryear]{natbib}
\usepackage{comment}

\usepackage{sectsty}
\usepackage[compact]{titlesec} 
\allsectionsfont{\color{bl}\scshape\selectfont}

\numberwithin{equation}{section}
\numberwithin{figure}{section}
\newcounter{example}[section]
\numberwithin{example}{section}
\newcounter{remark}[section]
\numberwithin{remark}{section}
\newtheorem{theorem}{Theorem}[section]
\newtheorem{proposition}[theorem]{Proposition}
\newtheorem{lemma}[theorem]{Lemma}
\newtheorem{corollary}[theorem]{Corollary}

\newtheorem{exmp}[example]{Example}
\newtheorem{rmrk}[remark]{Remark}
\newenvironment{example}{\begin{exmp}\rm}{\end{exmp}}
\newenvironment{remark}{\begin{rmrk}\rm}{\end{rmrk}}

\makeatletter							
\def\printtitle{
    {\color{bl} \centering \huge \sc \textbf{\@title}\par}}		
\makeatother							

\title{Concentration inequalities for smooth random fields
\vspace{10pt}
\footnote{\footnotesize This research was partially supported by the Deutsche
      Forschungsgemeinschaft through the CRC 823 ``Statistics of nonlinear dynamical processes'', the CRC 649  ``Economic Risk'' and   by
Laboratory for Structural Methods of Data Analysis in Predictive Modeling, MIPT, RF government grant, ag. 11.G34.31.0073.}
}

\makeatletter							
\def\printauthor{
    {\centering \small \@author}}				
\makeatother							

\author{%
	Denis Belomestny and Vladimir Spokoiny \\
	Duisburg-Essen University and WIAS Berlin, Moscow Institute of Physics and Technology \\
	\vspace{20pt}
	}

\usepackage{fancyhdr}
	\pagestyle{fancy}					
\usepackage{lastpage}	
	\lhead{}
	\chead{}
	\rhead{}
	

\usepackage[runin]{abstract}			
\abslabeldelim{\quad}						%
\setlength{\abstitleskip}{-10pt}



\def\rdl{\epsilon}
\def\rd{\bb{\rdl}}
\def\rddelta{\delta}
\def\rdomega{\varrho}

\def\rdb{\rd}
\def\rdm{\underline{\rdb}}

\def\Id{I\!\!\!I}
\def\Ind{\operatorname{1}\hspace{-4.3pt}\operatorname{I}}

\def\muc{\mu_{c}}

\def\gmd{\gm_{0}}

\def\rhor{\omega}

\def\DP{D}
\def\DPc{\DP_{0}}
\def\DPb{\DP_{\rdb}}
\def\DPm{\DP_{\rdm}}

\def\gmi{\mathtt{b}}

\def\xivb{\xiv_{\rdb}}
\def\xivm{\xiv_{\rdm}}

\def\pen{\mathfrak{t}}

\def\ex{\mathrm{e}}
\def\entrl{\mathbb{Q}}
\def\entrlb{\entrl}

\def\gm{\mathtt{g}}
\def\gmc{\gm_{c}}
\def\gmb{\gm}

\def\yy{\mathtt{y}}
\def\yyc{\yy_{c}}
\def\xx{\mathtt{x}}
\def\xxc{\xx_{c}}

\def\rups{\rr_{0}}

\def\CS{\cc{E}}

\def\nunu{\nu_{0}}

\def\dist{d}

\def\cdimb{\mathfrak{c}_{1}}

\def\rdomega{\varrho}

\def\err{\diamondsuit}

\def\errm{\err_{\rdm}}
\def\errb{\err_{\rdb}}

\def\pnnd{\mathfrak{u}}

\def\dimp{p}

\def\dimA{\mathtt{p}_{0}}

\def\BB{I\!\!B}
\def\vA{\mathtt{v}}

\def\thetav{\bb{\theta}}
\def\thetavs{\thetav^{*}}

\def\ups{\bb{\upsilon}}
\def\upss{\ups_{0}}
\def\upsc{\ups^{\prime}}
\def\upsd{\ups^{\circ}}

\def\Ups{\varUpsilon}
\def\Upsd{\Ups^{\circ}}
\def\Upss{\Ups_{\circ}}

\def\Thetas{\Theta_{0}}

\def\UP{\cc{U}}
\def\up{\mathfrak{u}}

\def\VP{V}
\def\VPc{\VP_{0}}

\def\VV{H}

\def\VVc{\VV_{0}}

\def\mes{\pi}

\def\lambdaB{{\lambda}^{*}}

\def\fis{\mathfrak{a}}

\def\Ldrift{M}

\def\B{\cc{B}}

\def\mubc{\mu}

\def\Mubc{\mathbb{M}}

\def\zzc{\zz_{c}}
\def\ww{w}
\def\wwc{\ww_{c}}



\def\Lmgf{\mathfrak{M}}
\def\Lmgfb{\Lmgf^{*}}

\def\rr{\mathtt{r}}
\def\rrb{\rr^{*}}

\def\zz{\mathfrak{z}}

\def\zzQ{\zz_{0}}







\def\dimA{\mathtt{p}}

\def\dimA{\mathtt{p}}





\renewcommand{\(}{$\,}
\renewcommand{\)}{\,$}

\def\nquad{\hspace{-1cm}}
\def\eqdef{\stackrel{\operatorname{def}}{=}}

\newcommand{\cc}[1]{\mathscr{#1}}
\newcommand{\bb}[1]{\boldsymbol{#1}}

\renewcommand{\tilde}[1]{\widetilde{#1}}

\renewcommand{\Gamma}{\varGamma}
\renewcommand{\Pi}{\varPi}
\renewcommand{\Sigma}{\varSigma}
\renewcommand{\Delta}{\varDelta}
\renewcommand{\Lambda}{\varLambda}
\renewcommand{\Psi}{\varPsi}
\renewcommand{\Phi}{\varPhi}
\renewcommand{\Theta}{\varTheta}
\renewcommand{\Omega}{\varOmega}
\renewcommand{\Xi}{\varXi}
\renewcommand{\Upsilon}{\varUpsilon}

\def\Var{\operatorname{Var}}

\def\argmax{\operatornamewithlimits{argmax}}

\def\tr{\operatorname{tr}}

\def\R{I\!\!R}
\def\E{I\!\!E}
\def\P{I\!\!P}

\def\kappa{\varkappa}

\def\T{\top}

\def\diam{\operatorname{diam}}

\def\gammav{\bb{\gamma}}

\def\xiv{\bb{\xi}}


\def\Za{\mathbb{Z}}
\def\Zab{\Za_{\rdb}}
\def\Zam{\Za_{\rdm}}

\begin{document}

\printtitle

\printauthor

\begin{abstract}
In this note we derive a sharp concentration inequality for the supremum of a smooth random field over a finite dimensional set.  It is shown that this supremum can be bounded with high probability by  the value of the field at some deterministic point plus an intrinsic  dimension of the optimisation problem. As an application we prove the exponential inequality for a function of the maximal eigenvalue of a random matrix is proved.
\end{abstract}

\section{Introduction}
Concentration inequalities is
a quite active field of research, which is driven by numerous applications, see \citet{L} and \citet{Lu} for an overview. 
Concentration inequalities have been used in many fields of both pure
and applied mathematics, including stochastic optimization, random
matrix theory, geometric functional analysis, randomized algorithms,
statistics, machine learning and compressed sensing.
A typical situation where the concentration inequalities are useful is the case where  one is interested in probabilistic bounds for a random variable which is the solution of a 
(stochastic) optimization problem.
This type of problems appear in statistics and stochastic optimisation. Many statistical estimators (e.g. the maximum-likelihood estimator) are solutions to random optimization problems. There is a substantial statistical literature dealing with concentration in statistics, see \citet{M} for an overview. On the stochastic optimisation side let us mention the bin packing problem and the travelling salesman problem where the concentration approach leads to  rather sharp probabilistic bounds for the quantities of interest. 
For example, in the bin packing problem  we are given \( n \) items of sizes in the interval \( [0,1] \) and are required to pack them into the fewest number of unit-capacity bins as possible. In the stochastic version, the item sizes are independent random variables in the interval \( [0, 1]. \)
\par
In this note we prove rather general and sharp concentration inequality for  smooth random fields. As a simple corollary of the main result we get a sharp exponential inequality for  
a convex function of the maximal eigenvalue of a random matrix.

\section{Main setup}

Let \( G(x;\thetav), \) \( \thetav\in \Theta \subseteq \R^{\dimp} \) be a family of real valued functions on \( \R^n \) and let \( X \) be a random vector in \( \R^n. \)  The purpose of this paper is to derive  exponential probability bounds for the random variable:
\[
\sup_{\thetav \in \Theta} G(X,\thetav). 
\] 
First we make the following assumptions.
\begin{description}
\item[\( \bb{(G\!C)} \)] 
    \textit{The function \( G(x,\thetav) \) is smooth in \( \thetav \) for any \( x\in \R^n \) and the mean function \( M(\thetav)\eqdef\E G(X,\thetav) \) is three times continuously differentiable in \( \thetav. \)  
Denote
\begin{EQA}[c]
    \thetavs
    \eqdef
    \argmax_{\thetav \in \Theta} M(\thetav).
\label{thetavsd}
\end{EQA}
There is a positive definite symmetric matrix \( D^* \) and a positive number \( \rr_0>0 \) such that} 
\begin{EQA}[c]
\nabla^2 M(\thetav)\preceq -D^*, \quad |\thetav-\thetavs|=\rr, \quad\rr>\rr_0.
\label{glob_conc}
\end{EQA}
\end{description}
\begin{description}
\item[\( \bb{(V\!I)} \)] 
\textit{There is  a symmetric positive definite matrix \( V_0 \)  such that}
\begin{EQA}[c]
 \Var\bigl\{ \nabla_{\thetav} G(X,\thetavs) \bigr\}\preceq \VPc^{2}
\end{EQA}
and 
\begin{EQA}[c]
V^2_0\succeq \varepsilon^{-2} I, \quad \fis^{2} \DPc^{2} \succeq \VPc^{2}  
\label{AssId}
\end{EQA}
with \( \nabla^2 M(\thetavs)=-D^2_0, \) a small parameter \(  \varepsilon\in (0,1/2)\) and \( \fis\in \R. \)
\end{description}
Introduce a centred random field:
\begin{EQA}[c]
    \zeta(\thetav)
    \eqdef
     G(X;\thetav)-\E G(X;\thetav)
\label{thetavsd}
\end{EQA}
and
a local elliptic 
neighbourhood of \( \thetavs \) via
\begin{EQA}[c]
    \Thetas(\rr) 
    \eqdef 
    \{ \thetav \in \Theta: \| \VPc (\thetav - \thetavs) \| \le \rr \}.
\label{Theta0R}
\end{EQA}   
Finally we make two integrability assumptions. 
\begin{description}
\item[\( \bb{(E\!D)} \)] \emph{There exists a 
    constant \( \rhor_0 \) such that it holds for all \( \thetav \in 
    \Thetas(\rr) \) } and all \( \rr \le \rups \), 
\begin{EQA}
\label{expzetacloc1} 
    \sup_{\gammav \in \R^{\dimp}} 
    \log \E \exp\biggl\{ 
        \lambda 
        \frac{\gammav^{\T} \{\nabla\zeta(\thetav) - \nabla \zeta(\thetavs) \}}
             {\rhor_0\varepsilon\rr \| \VPc \gammav \|} 
    \biggr\} 
    & \le &
    \nunu^{2} \lambda^{2} / 2,
    \qquad 
    |\lambda| \le \gmb.
\end{EQA}
\end{description}
\begin{description}
\item[\( \bb{(E\rr)} \)] 
    \textit{ It holds for any \( \lambda>0, \)} 
\begin{EQA}[c]
\label{expzetag} 
    \sup_{\gammav \in \R^{\dimp}} \,\,
    \log \E \exp\biggl\{ 
        \lambda \frac{\gammav^{\T} \nabla\zeta(\thetav)}
                     {\| \VPc \gammav \|} 
    \biggr\} \le 
    \nunu^{2} \lambda^{2} / 2. 
\end{EQA}
\end{description}
\paragraph{Discussion}
Under \( (G\!C) \) the second order Taylor expansion of the function \( M(\thetav) \) at \( \thetavs \) gives 
\begin{EQA}[c]
M(\thetav)=M(\thetavs)+\frac{1}{2}(\thetav-\thetavs)^\top \nabla^2 M(\thetavs)(\thetav-\thetavs)+R(\thetav)
    \label{taylor1}
\end{EQA}
with 
\[ 
\frac{R(\thetav)}{\|\thetav-\thetavs\|^2}=O(\|\thetav-\thetavs\|), \quad  \|\thetav-\thetavs\|\to 0.
\] 
 Then under  \( (V\!I) \)
\begin{EQA}[c]
\label{LmgfquadEL}
        \biggl| 
            \frac{2(M(\thetav)-M(\thetavs))}{\| \DPc (\thetav - \thetavs) \|^{2}} + 
            1
        \biggr|
        \le 
        \delta_0\varepsilon\rr, \quad \thetav\in\Theta_0(\rr)
\end{EQA}  
for some \( \delta_0>0. \) The condition \eqref{glob_conc} basically means that \( M \) is globally concave and together with the  Taylor expansion 
\begin{EQA}[c]
M(\thetav)=M(\thetavs)+(\thetav-\thetavs)^\top \nabla^2 M(\thetavs+\alpha(\thetav-\thetavs))(\thetav-\thetavs), \quad \alpha\in (0,1)
    \label{taylor2}
\end{EQA}
gives
\begin{EQA}[c]
         M(\thetav)-M(\thetavs) \le 
    -\frac{\lambda_{\min}(D^*)}{\lambda_{\max}(V^2_0)}   \rr^{2}\eqdef-\gmi^*  \rr^{2}
\label{xxentrtt}
\end{EQA}  
if \( \| \thetav - \thetavs \|=\rr. \)
\section{Main result}
Define for \( \BB \eqdef \DPc^{-1} \VPc^{2} \DPc^{-1} \)
\begin{EQA}[c]
\dimA \eqdef \tr (\BB) , 
    \qquad 
        \vA^{2}
    \eqdef
    2 \tr(\BB^{2}),
    \qquad 
    \lambda_{0}
    \eqdef
    \| \BB \|_{\infty}
    =
    \lambda_{\max}\bigl( \BB \bigr).
\label{BBrdd}
\end{EQA}   
\begin{theorem} Under assumptions \( (G\!C), \) \( (V\!I), \) \((E\!D)\) and \((E\rr)\) 
\begin{EQA}[c]
\P\Bigl(\sup_{\thetav\in \Theta}G(X,\thetav)>G(X,\thetavs)+\lambda_0\dimA/2 +c\lambda_0(\vA\sqrt{x}+x)\Bigr)\leq e^{-x},    
\label{main_ineq}
\end{EQA}
for any \( x>0 \) satisfying \( \varepsilon \sqrt{(x+3p)}<1 \) and some constant \( c \) depending on  \( \nunu, \) \( \gmi^*, \) \( \delta_0 \) and \( \rhor_0  \) only. 
\end{theorem}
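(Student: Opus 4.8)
The plan is to write the excess $G(X,\thetav)-G(X,\thetavs)$ as the deterministic loss $M(\thetav)-M(\thetavs)\le 0$ plus the stochastic increment $\zeta(\thetav)-\zeta(\thetavs)$, to localise the supremum to an elliptic neighbourhood $\Thetas(\rr_{x})$ of $\thetavs$ with $\rr_{x}$ of order $\sqrt{x+3\dimp}$ --- so that $\varepsilon\rr_{x}$ stays below a fixed constant, which is exactly what the restriction $\varepsilon\sqrt{x+3\dimp}<1$ buys --- and then to reduce the local problem to a deviation bound for a quadratic form in the score $\nabla\zeta(\thetavs)=\nabla_{\thetav}G(X,\thetavs)$. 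On the complement $\Theta\setminus\Thetas(\rr_{x})$ one exploits the global concavity \eqref{glob_conc}--\eqref{xxentrtt}; inside $\Thetas(\rr_{x})$ the Taylor bracketing \eqref{LmgfquadEL} together with the increment assumption $(E\!D)$.

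\emph{Global part.} For $\thetav$ with $\|\thetav-\thetavs\|=\rr>\rr_0$, \eqref{xxentrtt} gives $M(\thetav)-M(\thetavs)\le-\gmi^{*}\rr^{2}$, while $\zeta(\thetav)-\zeta(\thetavs)=\int_0^1\langle\nabla\zeta(\thetavs+t(\thetav-\thetavs)),\thetav-\thetavs\rangle\,dt$ is, by $(E\rr)$, sub-Gaussian with a scale that grows only linearly in $\rr$. Slicing $\Theta$ into dyadic shells $\{2^{k}\rr_0<\|\thetav-\thetavs\|\le 2^{k+1}\rr_0\}$ and, on each shell, bounding $\sup\{\zeta-\zeta(\thetavs)\}$ by a chaining estimate over a finite net of $\R^{\dimp}$ (whose log-cardinality contributes a term of order $k\dimp$), the quadratic loss $\gmi^{*}4^{k}\rr_0^{2}$ dominates the stochastic term for every $k$ on an event of probability at least $1-e^{-x}/2$; there $\sup_{\Theta\setminus\Thetas(\rr_{x})}\{M-M(\thetavs)+\zeta-\zeta(\thetavs)\}\le 0$ once $\rr_{x}$ is of the stated order. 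Here one also uses $\fis^{2}\DPc^{2}\succeq\VPc^{2}$ and $\VPc^{2}\succeq\varepsilon^{-2}I$ to pass between the $\VPc$- and Euclidean scalings and to glue the inner annulus to the range of validity of \eqref{LmgfquadEL}.

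\emph{Local part and quadratic form.} On $\Thetas(\rr_{x})$, \eqref{LmgfquadEL} yields $M(\thetav)-M(\thetavs)\le-\tfrac{1}{2}(1-\delta_0\varepsilon\rr_{x})\|\DPc(\thetav-\thetavs)\|^{2}$. Splitting $\zeta(\thetav)-\zeta(\thetavs)=\langle\nabla\zeta(\thetavs),\thetav-\thetavs\rangle+\mathcal{R}(\thetav)$ with $\mathcal{R}(\thetav)=\int_0^1\langle\nabla\zeta(\thetavs+t(\thetav-\thetavs))-\nabla\zeta(\thetavs),\thetav-\thetavs\rangle\,dt$, assumption $(E\!D)$ --- whose built-in factor $\rhor_0\varepsilon\rr$ is tailored to exactly this step --- gives, by chaining over $\Thetas(\rr_{x})$, $\sup_{\Thetas(\rr_{x})}|\mathcal{R}(\thetav)|\le C\rhor_0\nunu\,\varepsilon\rr_{x}^{2}\sqrt{\dimp+x}$ with probability $\ge 1-e^{-x}/4$, a term of lower order since $\varepsilon\rr_{x}<1$. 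Maximising the resulting slightly shrunk quadratic form over $\thetav$ gives $\sup_{\Thetas(\rr_{x})}\{\langle\nabla\zeta(\thetavs),\thetav-\thetavs\rangle-\tfrac{1}{2}(1-\delta_0\varepsilon\rr_{x})\|\DPc(\thetav-\thetavs)\|^{2}\}\le(2(1-\delta_0\varepsilon\rr_{x}))^{-1}\|\DPc^{-1}\nabla\zeta(\thetavs)\|^{2}$, the unconstrained maximiser lying in $\Thetas(\rr_{x})$ on the good event (checked by bounding $\|\VPc\DPc^{-2}\nabla\zeta(\thetavs)\|$ through $\lambda_{0}$ and the sub-Gaussian norm of $\nabla\zeta(\thetavs)$). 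By $(V\!I)$, $\Var\{\nabla\zeta(\thetavs)\}\preceq\VPc^{2}$, so $\E\|\DPc^{-1}\nabla\zeta(\thetavs)\|^{2}\le\tr(\BB)=\dimA$, and by $(E\rr)$ at $\thetav=\thetavs$ the forms $\gammav^{\T}\nabla\zeta(\thetavs)$ are sub-Gaussian with proxy $\nunu^{2}\|\VPc\gammav\|^{2}$; the standard exponential deviation bound for quadratic forms of such vectors, with the parameters $\dimA$, $\vA^{2}=2\tr(\BB^{2})$, $\lambda_{0}=\|\BB\|_{\infty}$ of \eqref{BBrdd}, then gives $\|\DPc^{-1}\nabla\zeta(\thetavs)\|^{2}\le\dimA+C\nunu^{2}(\vA\sqrt{x}+\lambda_{0}x)$ with probability $\ge 1-e^{-x}/4$. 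Intersecting the three events and collecting terms --- using $\varepsilon\rr_{x}<1$, the crude bound $\vA^{2}\le\lambda_{0}\dimA$, and absorbing every constant depending only on $\nunu,\gmi^{*},\delta_0,\rhor_0$ into $c$ --- bounds $\sup_{\thetav\in\Theta}\{M-M(\thetavs)+\zeta-\zeta(\thetavs)\}$ by $\lambda_{0}\dimA/2+c\lambda_{0}(\vA\sqrt{x}+x)$, which is the claim \eqref{main_ineq}.

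\emph{Main obstacle.} The substantive work lies in the two uniform-in-$\thetav$ estimates: the slicing/chaining control of $\sup_{\thetav}\{\zeta(\thetav)-\zeta(\thetavs)\}$ over the \emph{unbounded} complement, where the union bound over shells must be summable in $k$ while the quadratic loss must beat the linearly growing stochastic term on every shell; and the chaining control of $\sup_{\Thetas(\rr_{x})}|\mathcal{R}(\thetav)|$ based on $(E\!D)$. Secondary but still delicate is tracking which structural constants and which of $\dimA,\vA,\lambda_{0}$ enter each contribution (Taylor remainder, score-linear term, increment remainder, quadratic form), so that after the final crude estimate everything collapses to the stated bound under the single condition $\varepsilon\sqrt{x+3\dimp}<1$.
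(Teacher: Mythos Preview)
Your proposal is correct and follows essentially the same route as the paper: a global localisation step that uses the quadratic lower bound \eqref{xxentrtt} together with $(E\rr)$ to show $\sup_{\thetav\notin\Thetas(\rr_x)}Z(\thetav,\thetavs)\le 0$ with high probability, and a local step that Taylor-expands $Z$ around $\thetavs$, controls the stochastic second-order remainder through $(E\!D)$, and finishes with the sub-Gaussian quadratic-form deviation bound for $\|\DPc^{-1}\nabla\zeta(\thetavs)\|^{2}$ in terms of $\dimA,\vA,\lambda_0$.

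The only organisational differences are that the paper packages the two chaining steps through the abstract empirical-process results (Theorems~\ref{Tsmoothpenlc} and~\ref{Thitting}) rather than redoing the dyadic slicing by hand, and that in the local step the paper absorbs a small multiple of $\|\VPc(\thetav-\thetavs)\|^{2}$ into the quadratic part via the modified matrix $\DPb^{2}=\DPc^{2}(1-\rddelta)-\rdomega\VPc^{2}$ before bounding the remainder (Proposition~\ref{TapproxLL}), which lets it invoke the negative-drift chaining bound and avoids having to check separately that the unconstrained maximiser lies in $\Thetas(\rr_x)$; your direct bound on $\sup|\mathcal{R}|$ achieves the same thing once $\rr$ is fixed at $\rr_x$. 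These are cosmetic rearrangements of the same argument.
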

\paragraph{Applications (maximal eigenvalue)}
Let \( A=(a_{ij})_{i,j=1}^p \) be a Hermitian random matrix with a positive definite symmetric mean matrix \( \E A \)
and let
\begin{EQA}[c]
G(A,\thetav)\eqdef\thetav^{\top} A\thetav-f(\|\thetav\|^2), \quad \thetav\in \Theta
\label{GEx}
\end{EQA}
with \( \Theta=\{\thetav\in \R^p: |\thetav|<R\}\) for some large enough \( R>0 \) and a nonnegative monotone increasing smooth function \( f. \)  Let \( f^* \) be the Legendre transform of \( f, \) then
\begin{EQA}[c]
\sup_{\thetav\in \Theta}G(A,\thetav)= f^*(\lambda_{\max}(A)).
\label{app_opt}
\end{EQA}
Since \( M(\thetav)\eqdef\E G(A,\thetav) =\thetav^{\top} \E A\thetav-f(\|\thetav\|^2),\)
\begin{EQA}[c]
\sup_{\thetav\in \Theta}M(\thetav)=f^*(\lambda_{\max}(\E A))
\label{app_opt_e}
\end{EQA}
and the maximum is attained in the point \( \thetav^*=\sqrt{r^*}\bb{e}_p, \)
where \( \bb{e}_p \) is the eigenvector of the matrix \( \E A \) corresponding to its largest eigenvalue and \( r^*>0 \) solves the equation \( f'(r^*)=\lambda_{\max}(\E A). \) Moreover 
\begin{EQA}[c]
\nabla^2 M(\thetav)=\E A-f'(\|\thetav\|^2)\,I-f''(\|\thetav\|^2)\,\thetav\thetav^{\top}
\label{app_hess}
\end{EQA}
and as a result 
\begin{EQA}[c]
\nabla^2 M(\thetavs)=\E A-\lambda_{\max}(\E A)\,I-f''(r^*)\,r^*\bb{e}_p\bb{e}^{\top}_p\eqdef-D^2_0 \label{app_hess}
\end{EQA}
for some positive definite matrix \( D_0, \) provided \( f''(r^*)>0. \) Hence the assumption (GC) is fulfilled if \( f \) is globally convex. Assume
\begin{EQA}[c]
\sup_{\|\thetav\|=r^*}\sup_{\gammav \in \R^{\dimp}} \,\,
    \log \E \exp\biggl\{ 
        \lambda \frac{\gammav^{\T} (A-\E A)\thetav}
                     {\| \VPc \gammav \|} 
    \biggr\} \le 
    \nunu^{2} \lambda^{2} / 2, 
\label{app_em}
\end{EQA}
where   \( \VPc= \Var(A\thetavs)\).
Our main result implies 
\begin{EQA}
\P\Bigl(f^*(\lambda_{\max}(A))&-&f^*(\lambda_{\max}(\E A))\geq 
\\
&&\frac{\lambda_0 \dimA}{2}+(\thetavs)^{\top} (A-\E A) \thetavs+c\lambda_0(\vA\sqrt{x}+x)\Bigr)\leq e^{-x}
\label{eq:app1}
\end{EQA}
with \( \dimA=\tr\bigl(D_0^{-2} \VPc^{2}\bigr) \) and \( \vA^2= \tr\bigl(D_0^{-4} \VPc^{4}\bigr).\)
Furthermore it follows from \eqref{app_em}
\begin{EQA}[c]
\P\Bigl((\thetavs)^{\top} (A-\E A) \thetavs>\sqrt{x}\| \VPc \thetavs \|\Bigr)\leq e^{\nu_0^2/2}e^{-x}.
\label{eq:app2}
\end{EQA}
Combining \eqref{eq:app1} with \eqref{eq:app2}, we get
\begin{EQA}
\P\Bigl(f^*(\lambda_{\max}(A))&-&f^*(\lambda_{\max}(\E A))\geq 
\\
&&\frac{\lambda_0 \dimA}{2}+\sqrt{x}\| \VPc \thetavs \|+c\lambda_0(\vA\sqrt{x}+x)\Bigr)\leq (1+e^{\nu_0^2/2})\,e^{-x}.
\label{eq:appmain}
\end{EQA}
Let us compare the above inequality with the known results on the maximal eigenvalue of a  random Hermitian matrix.
For example, in \citet{MJCFT} an exponential inequality for the spectral norm of a bounded Hermitian random matrix \( A \) is derived via the method of exchangeable pairs.  In particular, it is shown that if \( A=X_1+\ldots+X_n, \) where \( X_1,\ldots,X_n \) are independent identically distributed Hermitian \( p\times p \) matrices satisfying
\begin{EQA}[c]
X_k^2\preceq B^2, \quad k=1,\ldots,n,
\label{cond}
\end{EQA}
then
\begin{EQA}[c]
\P(\lambda_{\max}(A-\E A)>t)\leq p\cdot \exp \left(-t^2/2\sigma^2\right)
\label{spnorm_ineq}
\end{EQA}
with \( \sigma^2=\frac{n}{2}\left\|B^2+\Var(X_1)\right\|. \)
The inequality \eqref{spnorm_ineq} is in fact equivalent to the following one
\begin{EQA}[c]
\P\Bigl(\lambda_{\max}(A-\E A)>\sqrt{2(x+\log p)}\sigma\Bigr)\leq \exp \left(-x\right)
\label{spnorm_ineq1}
\end{EQA}
In our setting with \( f(x)=nx^2 \) we get  \( r^*= \lambda_{\max}(\E A)/(2n)=\lambda_{\max}(\E X_1)/2,\)
\begin{eqnarray*}
V^2_0&=&\lambda_{\max}(\E A)\Var(A\bb{e}_p)/(2n)
\\ 
&=& n\cdot\lambda_{\max}(\E X_1)\Var(X_1\bb{e}_p)/2
\\
D^2_0&=&n\cdot\lambda_{\max}(\E A)(I+2\bb{e}_p\bb{e}_p^\top-\E X_1/\lambda_{\max}(\E X_1)) 
\end{eqnarray*}
and 
\begin{EQA}
D^{-2}_0V_0^2&=& (I+2\bb{e}_p\bb{e}_p^\top-\E X_1/\lambda_{\max}(\E X_1))^{-1}\Var(X_1\bb{e}_p)/2.
\label{}
\end{EQA}
Hence \( \dimA=c_1\cdot p \) and \( \vA=c_2\cdot p \) for some constants \( c_1 \) and \( c_2 \) not depending on \( n \) and \( p. \)
Furthermore,
\begin{EQA}[c]
\| \VPc \thetavs \|=\sqrt{n}\cdot\lambda_{\max}(\E X_1)\left\|\bb{e}_p^\top\Var^{1/2}(X_1\bb{e}_p)\right\|.
\label{VPc}
\end{EQA}
and the inequality  \eqref{eq:appmain} transforms to 
\begin{EQA}
\P\Bigl(\lambda^2_{\max}(A)-\lambda^2_{\max}(\E A)&\geq& c(\sqrt{nx}+x+p)\Bigr)\leq (1+e^{\nu_0^2/2})\,e^{-x}
\label{eq:appmain2}
\end{EQA} 
with some constant \( c>0 \) not depending on \( p \) and \( n. \)
\par
Note that in the domain \( \lambda_{\max}(A+\E A)>1, \) \( p/n<1 \) the inequality \eqref{eq:appmain2} is more accurate than \eqref{spnorm_ineq1}.  Moreover, while the condition \eqref{cond} basically means that \( A \)
is bounded with probability \( 1, \) our assumption \eqref{app_em} only requires a sub-gaussian behaviour
of \( A-\E A. \) 
\section{Proof of the main result}
\begin{proof}
Denote \( Z(\thetav,\thetavs)\eqdef G(X,\thetav)-G(X,\thetavs). \) We get from Proposition~\ref{TapproxLL}, Lemma~\ref{LLbreveloc} and Lemma~\ref{Lxivgap}
\begin{EQA}
 \sup_{\thetav \in \Thetas(\rr)}Z(\thetav,\thetavs)&\leq &\sup_{\thetav \in \Theta}\Zab(\thetav,\thetavs)+\errb(\rr) 
 \\
 & \leq & \| \xivb \|^{2}/2+\errb(\rr)  
 \\
 & = & \| \xiv \|^{2}/2+\bigl\{\| \xivb \|^{2}-\| \xiv \|^{2}\bigr\}/2+\errb(\rr) 
 \\
 &\leq & \| \xiv \|^{2}/2+\frac{\tau_{\rd}}{2(1 - \tau_{\rd})} \| \xiv \|^{2}+\errb(\rr)
 \\
 &=& \frac{\| \xiv \|^{2}}{2(1 - \tau_{\rd})} +\errb(\rr)
\end{EQA}
Now Proposition~\ref{LLbrevelocm} implies
\begin{EQA}
 \P\Bigl(\sup_{\thetav \in \Thetas(\rr_0)}Z(\thetav,\thetavs)&>&\frac{\lambda_0\cdot\zz(\xx,\BB)}{2(1 - \tau_{\rd})}+6\nu_0\omega_0\varepsilon\rr_0\bigl( 1 + \sqrt{\xx + 3p} \bigr)^{2}\Bigr)\leq  4e^{-\xx},
 \end{EQA}
where \( \zz(\xx,\BB) \) is given by \eqref{zzxxppdBl}.
Next, we shall prove that there is  \( \rr_0>0 \) and a deterministic upper function
\( \pnnd(\thetav)\geq 0 \) such that
\begin{EQA}[c]
    \P\Bigl( 
        \sup_{\thetav \in \Theta\setminus \Thetas(\rr)} 
            \bigl\{ Z(\thetav,\thetavs) + \pnnd(\thetav) \bigr\}
        \ge  
        0
    \Bigr)
    \le 
    \ex^{-\xx} 
\label{hitprobxxgl}
\end{EQA}
for \( \rr>\rr_0 \) and \( \xx > 0 \).
The inequality \eqref{hitprobxxgl} then implies 
\begin{EQA}[c]
    \P\Bigl( \sup_{\thetav \not\in \Thetas(\rr_0)}Z(\thetav,\thetavs)\geq 0\Bigr) 
    \le 
    \ex^{-\xx} .
\label{PnotinTsruc}
\end{EQA}    
A possible way of checking the condition \eqref{hitprobxxgl} is based on a lower 
quadratic bound for the negative expectation \( M(\thetav) \)
in the sense of condition \eqref{xxentrtt}.
\begin{lemma}
\label{CThittingglrc}
Suppose \( (GC) \) and \( (E\rr). \)
Let, for \( \rr \ge \rups \),
\begin{EQA}
\label{cgmi1rrc}
    6 \nunu \sqrt{\xx + 3p}
    & \le &
    \rr \gmi^* ,
\label{cgmi2rrc}
\end{EQA}
with \( \xx + 3p \ge 2.5. \)  Then
\begin{EQA}[c]
    \P\Bigl( \sup_{\thetav \not\in \Thetas(\rr)}Z(\thetav,\thetavs)\geq 0\Bigr) 
    \le 
    \ex^{-\xx} .
\label{PnotinTsruc}
\end{EQA}    
\end{lemma}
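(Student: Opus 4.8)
The plan is to establish the "hitting probability" bound \eqref{hitprobxxgl} with a suitable deterministic upper function $\pnnd(\thetav)\ge 0$ that dominates $-M(\thetav)+M(\thetavs)$ up to a stochastic fluctuation term controlled by $(E\rr)$. Since $Z(\thetav,\thetavs)=\zeta(\thetav)-\zeta(\thetavs)+M(\thetav)-M(\thetavs)$, on the set $\{\|\thetav-\thetavs\|=\rr\}$ with $\rr\ge\rups$ the bound \eqref{xxentrtt} gives $M(\thetav)-M(\thetavs)\le -\gmi^*\rr^2$. So the event $\sup_{\thetav\notin\Thetas(\rr)}Z(\thetav,\thetavs)\ge 0$ forces the centred field $\zeta(\thetav)-\zeta(\thetavs)$ to be at least $\gmi^*\|\thetav-\thetavs\|^2$ somewhere outside the elliptic neighbourhood. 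The natural choice is therefore $\pnnd(\thetav)=\gmi^*\|\thetav-\thetavs\|^2$ (or a comparable quadratic), and the task reduces to controlling $\sup_{\thetav}\{\zeta(\thetav)-\zeta(\thetavs)-\gmi^*\|\thetav-\thetavs\|^2\}$.

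First I would set up a peeling (slicing) argument over spherical shells $\rr_j\le\|\thetav-\thetavs\|<\rr_{j+1}$, e.g. with a geometric grid $\rr_j=\rr\cdot 2^{j}$ or $\rr_j=\rr(1+j\varepsilon)$. On each shell the quadratic drift is of order $\gmi^*\rr_j^2$, which must beat the supremum of the increment $\zeta(\thetav)-\zeta(\thetavs)$ over that shell. To bound the latter, I would use the exponential moment assumption $(E\rr)$, which controls the MGF of directional gradients $\gammav^\T\nabla\zeta(\thetav)/\|\VPc\gammav\|$ sub-Gaussianly with parameter $\nunu$. Via a chaining / covering argument over the $\dimp$-dimensional shell (metric entropy of order $p\log(\cdot)$), one obtains a Gaussian-type tail: $\sup$ over the shell of the increment is, with probability $\ge 1-e^{-\xx}$, of order $\nunu\bigl(\sqrt{p}+\sqrt{\xx}\bigr)\cdot(\text{radius scale})$. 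The precise scaling should be such that it is dominated by $\gmi^*\rr_j^2$ precisely under the hypothesis $6\nunu\sqrt{\xx+3p}\le\rr\gmi^*$.

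The key inequality to verify on shell $j$ is roughly: $\gmi^*\rr_j^2 \;\ge\; \nunu\,\rr_j\,(\,c_1\sqrt{p}+c_2\sqrt{\xx+j}\,)$, i.e. $\gmi^*\rr_j \ge \nunu(c_1\sqrt{p}+c_2\sqrt{\xx+j})$; the extra $\sqrt{j}$ (or $\log\rr_j$ if geometric peeling) is absorbed by choosing the grid geometric so that $\sum_j e^{-c\,\rr_j^2 (\gmi^*)^2/\nunu^2}$ telescopes against $e^{-\xx}$, using $\xx+3p\ge 2.5$ to guarantee convergence of the sum to something $\le 1$. Summing the shell-wise failure probabilities then yields \eqref{hitprobxxgl}, and since $\pnnd\ge 0$ on $\Theta\setminus\Thetas(\rr)$, this immediately gives \eqref{PnotinTsruc}.

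The main obstacle I anticipate is the chaining step: one must convert the pointwise (directional) sub-Gaussian control of $\nabla\zeta$ from $(E\rr)$ into a uniform-over-the-shell bound on $\zeta(\thetav)-\zeta(\thetavs)$ with the correct dependence on both the intrinsic dimension $p$ and the radius $\rr$, and then confirm that the resulting bound is exactly of the form that the clean hypothesis $6\nunu\sqrt{\xx+3p}\le\rr\gmi^*$ controls — in particular tracking that the radial factor is linear in $\rr$ (not quadratic) so that the quadratic drift $\gmi^*\rr^2$ wins for all $\rr\ge\rups$. A secondary technical point is choosing the peeling grid and constants so the geometric series of exponential tails sums to at most $e^{-\xx}$ rather than $C e^{-\xx}$; this is where the condition $\xx+3p\ge 2.5$ is used.
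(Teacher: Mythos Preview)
Your plan is correct in outline and is precisely the mechanism underlying Theorem~\ref{Thitting} (together with Theorems~\ref{TsuprUP}--\ref{Tsmoothpenlc}) from the appendix. The paper, however, does not redo any of this: its proof is a one-line invocation of Theorem~\ref{Thitting} with the single fixed scale \( \mubc = \gmi^{*}/(3\nunu) \), trivial penalty \( \pen(\mubc)\equiv 0 \), stochastic part \( \UP(\thetav)=\zeta(\thetav)-\zeta(\thetavs) \) (for which \( (E\rr) \) is exactly condition \( (\CS\rr) \)), and boundary \( \Ldrift(\thetav)=M(\thetavs)-M(\thetav)\ge \tfrac{\gmi^{*}}{2}\|\VPc(\thetav-\thetavs)\|^{2} \). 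With \( \entrlb=3p \), the hypothesis \( 6\nunu\sqrt{\xx+3p}\le \rr\gmi^{*} \) is what makes \eqref{Lmgfbupen} hold, and \( \xx+3p\ge 2.5 \) is the standing requirement \( \xx+\entrlb\ge 2.5 \) there. So your peeling-plus-chaining argument is the content that the appendix has already packaged; the paper only checks the parameter match. Your route is more self-contained but substantially longer; the paper's route is shorter but leans on the cited empirical-process machinery. One small correction to your write-up: the relevant distance throughout is \( \|\VPc(\thetav-\thetavs)\| \), not the Euclidean norm, so your shells and your quadratic \( \pnnd \) should be taken in that metric.
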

\begin{proof}
The result follows from Theorem~\ref{Thitting} with \( \mubc = \frac{\gmi^*}{3 \nunu} \),
\( \pen(\mubc) \equiv 0 \),
\( \UP(\thetav) = Z(\thetav,\thetavs) - \E Z(\thetav,\thetavs) \) and 
\( \Ldrift(\thetav,\thetavs) = M(\thetav)-M(\thetavs) 
\ge \frac{\gmi^*}{2} \| \VPc (\thetav - \thetavs) \|^{2} \).
\end{proof}
It follows now from Lemma~\ref{CThittingglrc} that the inequality 
\begin{EQA}
\sup_{\thetav \in \Theta}Z(\thetav,\thetavs)&\leq&\sup_{\thetav \in \Thetas(\rr_0)}Z(\thetav,\thetavs)
\end{EQA}
holds with probability at least \( 1-\ex^{-\xx}. \)
As a result we get the desired inequality.
 \end{proof}
\subsection{Auxiliary results}
Let \( \rddelta, \rdomega \) be nonnegative constants.
Introduce for a vector \( \rd = (\rddelta,\rdomega) \) the following notation:
\begin{EQA}
    \Zab(\thetav,\thetavs)
    & \eqdef &
    (\thetav - \thetavs)^{\T} \nabla \zeta(\thetavs)
    - \| \DPb (\thetav - \thetavs) \|^{2}/2 
    \\
    &=&
    \xivb^{\T} \DPb (\thetav - \thetavs) 
    - \| \DPb (\thetav - \thetavs) \|^{2}/2 ,
\label{bLquadloc}
\end{EQA}
where \( \nabla \zeta(\thetavs) = \nabla_{\thetav} G(X,\thetavs) \) by \( \nabla M(\thetavs) = 0 \)
and
\begin{EQA}
    \DPb^{2} 
    &=&
    \DPc^{2} (1 - \rddelta) - \rdomega \VPc^{2}, 
    \qquad 
    \xivb
    \eqdef 
    \DPb^{-1} \nabla G(X,\thetavs) .
\label{xivalpsn}
\end{EQA}  
Here we implicitly assume that with the proposed choice of the constants 
\( \rddelta \) and \( \rdomega \),
the matrix \( \DPb^{2} \) is non-negative: \( \DPb^{2} \ge 0 \). 
The representation \eqref{bLquadloc} indicates that the process 
\( \Zab(\thetav,\thetavs) \) has the quadratic local structure.
Now, given \( \rr \), fix some \( \rddelta \ge \rddelta_0 \varepsilon\rr \) and
\( \rdomega \ge 3\nunu \rhor_0\varepsilon\rr \) with the value \( \rddelta_0 \) from 
 \eqref{LmgfquadEL} and \( \rhor_0\) from condition \( (E\!D) \).
Finally set \( \rdm = - \rdb \), so that 
\(
    \DPm^{2} 
    =
    \DPc^{2} (1 + \rddelta) + \rdomega \VPc^{2}.
\)  
\begin{proposition}
\label{TapproxLL}
Assume \( (E\!D) \) and \( (V\!I) \). 
Let for some \( \rr \), the values 
\( \rdomega \ge 3 \nunu \, \rhor_0 \varepsilon\rr \) and \( \rddelta \ge \rddelta_0 \varepsilon\rr \) be such that 
\( \DPc^{2} (1 - \rddelta) - \rdomega \VPc^{2} \ge 0 \). 
Then
\begin{EQA}[c]
    \Zam(\thetav,\thetavs) - \errm(\rr)
    \le 
    Z(\thetav,\thetavs) 
    \le 
    \Zab(\thetav,\thetavs) + \errb(\rr),
    \quad 
    \thetav \in \Thetas(\rr),
    \qquad 
\label{LttbLtt}
\end{EQA}    
with  \( \Zab(\thetav,\thetavs), \Zam(\thetav,\thetavs) \) defined by 
\eqref{bLquadloc}. 
The error terms \( \errb(\rr) \) and \( \errm(\rr) \) satisfy 
\begin{EQA}[c]
    \P\bigl( 
        \rdomega^{-1} \max\{\errb(\rr),\errm(\rr)\}
        \ge 
        \bigl( 1 + \sqrt{\xx + 3p} \bigr)^{2}
    \bigr)
    \le 
    \exp\bigl( - \xx \bigr).
\label{errbzrr}
\end{EQA}    
\end{proposition}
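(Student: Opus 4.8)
\emph{Proof plan.}
The plan is to split $Z(\thetav,\thetavs) = G(X,\thetav) - G(X,\thetavs)$ into its mean and a centred part, to peel off from each the quadratic-in-$\thetav$ contribution that forms $\Zab$ and $\Zam$, and then to show that the two leftover terms are dominated by the penalty $\frac{\rdomega}{2}\| \VPc(\thetav - \thetavs) \|^{2}$ up to the stochastic error announced in \eqref{errbzrr}. Using $\nabla M(\thetavs) = 0$ write
\begin{EQA}[c]
    Z(\thetav,\thetavs)
    =
    \bigl\{ M(\thetav) - M(\thetavs) \bigr\}
    + (\thetav - \thetavs)^{\T} \nabla\zeta(\thetavs)
    + \mathcal{R}(\thetav) ,
\label{Zsplit}
\end{EQA}
where $\mathcal{R}(\thetav) \eqdef \zeta(\thetav) - \zeta(\thetavs) - (\thetav - \thetavs)^{\T}\nabla\zeta(\thetavs)$ is the second order Taylor remainder of the centred field. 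It satisfies $\mathcal{R}(\thetavs) = 0$ and $\nabla\mathcal{R}(\thetav) = \nabla\zeta(\thetav) - \nabla\zeta(\thetavs)$, hence $\mathcal{R}(\thetav) = \int_{0}^{1} (\thetav - \thetavs)^{\T}\bigl\{ \nabla\zeta(\thetavs + t(\thetav-\thetavs)) - \nabla\zeta(\thetavs) \bigr\}\, dt$.

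First I would dispose of the deterministic term. The Taylor expansion \eqref{taylor1} together with \eqref{LmgfquadEL} --- the only place condition $(V\!I)$ enters --- gives, for $\thetav \in \Thetas(\rr)$,
$- \frac{1}{2}(1 + \rddelta_0\varepsilon\rr)\, \| \DPc(\thetav - \thetavs) \|^{2} \le M(\thetav) - M(\thetavs) \le - \frac{1}{2}(1 - \rddelta_0\varepsilon\rr)\, \| \DPc(\thetav - \thetavs) \|^{2}$
with $\rddelta_0$ the constant of \eqref{LmgfquadEL}. Since $\rddelta \ge \rddelta_0\varepsilon\rr$ and, by construction, $\DPb^{2} = \DPc^{2}(1 - \rddelta) - \rdomega\VPc^{2}$ and $\DPm^{2} = \DPc^{2}(1 + \rddelta) + \rdomega\VPc^{2}$, this rewrites as
\begin{EQA}
    M(\thetav) - M(\thetavs) + \frac{1}{2}\| \DPb(\thetav - \thetavs) \|^{2}
    & \le &
    - \frac{\rdomega}{2}\| \VPc(\thetav - \thetavs) \|^{2} ,
    \\
    M(\thetav) - M(\thetavs) + \frac{1}{2}\| \DPm(\thetav - \thetavs) \|^{2}
    & \ge &
    \frac{\rdomega}{2}\| \VPc(\thetav - \thetavs) \|^{2} .
\label{Mdrift}
\end{EQA}
Plugging \eqref{Mdrift} into \eqref{Zsplit} and recalling the definition \eqref{bLquadloc} of $\Zab,\Zam$, one gets the pointwise brackets $Z(\thetav,\thetavs) - \Zab(\thetav,\thetavs) \le \mathcal{R}(\thetav) - \frac{\rdomega}{2}\| \VPc(\thetav - \thetavs) \|^{2}$ and $Z(\thetav,\thetavs) - \Zam(\thetav,\thetavs) \ge \mathcal{R}(\thetav) + \frac{\rdomega}{2}\| \VPc(\thetav - \thetavs) \|^{2}$. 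Thus \eqref{LttbLtt} holds with
\begin{EQA}
    \errb(\rr)
    & = &
    \sup_{\thetav \in \Thetas(\rr)} \Bigl\{ \mathcal{R}(\thetav) - \frac{\rdomega}{2}\| \VPc(\thetav - \thetavs) \|^{2} \Bigr\} ,
    \\
    \errm(\rr)
    & = &
    \sup_{\thetav \in \Thetas(\rr)} \Bigl\{ - \mathcal{R}(\thetav) - \frac{\rdomega}{2}\| \VPc(\thetav - \thetavs) \|^{2} \Bigr\} ,
\label{errdef}
\end{EQA}
both nonnegative (evaluate at $\thetav = \thetavs$), so it only remains to establish the tail bound \eqref{errbzrr} for $\max\{\errb(\rr),\errm(\rr)\}$.

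The heart of the proof is thus a uniform bound for the field $\mathcal{R}$ over the local ellipsoid $\Thetas(\rr)$. Fix $\thetav$, put $s \eqdef \| \VPc(\thetav - \thetavs) \| \le \rr$ and assume $\rr \le \rups$, so that $(E\!D)$ is in force along the whole segment. Every point $\thetavs + t(\thetav - \thetavs)$, $t \in [0,1]$, lies in $\Thetas(ts)$, hence $(E\!D)$ with radius $ts$ and test vector $\gammav = \thetav - \thetavs$ shows that $(\thetav - \thetavs)^{\T}\bigl\{ \nabla\zeta(\thetavs + t(\thetav - \thetavs)) - \nabla\zeta(\thetavs) \bigr\}$ is sub-Gaussian with parameter $\nunu$ and scale $\rhor_0\varepsilon t s^{2}$; integrating over $t$ by Jensen's inequality makes $\mathcal{R}(\thetav)$, and likewise $-\mathcal{R}(\thetav)$, sub-Gaussian with parameter $\nunu$ and scale $\rhor_0\varepsilon s^{2}$, valid for $|\lambda| \le \gmb/(\rhor_0\varepsilon s^{2})$. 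With this pointwise control --- whose scale grows quadratically in the radius --- I would run a peeling/chaining argument over concentric $\VPc$-shells of $\Thetas(\rr)$: the shell of radius about $s$ is a $p$-dimensional set of diameter at most $2s$ with metric entropy of order $p$, on which $\mathcal{R}$ has scale of order $\nunu\rhor_0\varepsilon s^{2}$, so a covering/union bound shows that, with probability at least $1 - \ex^{-\xx}$, the supremum of $\mathcal{R}$ over that shell is at most a constant multiple of $\nunu\rhor_0\varepsilon s^{2}\bigl( 1 + \sqrt{\xx + p} \bigr)$ in the sub-Gaussian range and of $\nunu\rhor_0\varepsilon s^{2}(\xx + p)$ beyond it (the latter forced by the range restriction $|\lambda| \le \gmb$). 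Subtracting the penalty $\frac{\rdomega}{2}s^{2}$ and invoking $\rdomega \ge 3\nunu\rhor_0\varepsilon\rr$ --- the precise threshold that makes the penalty beat the fluctuations on every shell --- and summing over the shells collapses the supremum in \eqref{errdef} to its value near $\thetavs$, giving the announced bound $\rdomega\bigl( 1 + \sqrt{\xx + 3p} \bigr)^{2}$ (the two regimes above being exactly what $(1 + \sqrt{u})^{2} = 1 + 2\sqrt{u} + u$ interpolates). This is the kind of estimate packaged in the generic upper function bound (Theorem~\ref{Thitting}), here applied to the field $\mathcal{R}$ with zero penalty and drift $\frac{\rdomega}{2}\| \VPc(\cdot) \|^{2}$. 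A union bound over $\errb(\rr)$ and $\errm(\rr)$, the $\log 2$ loss absorbed in the constant, then yields \eqref{errbzrr}.

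I expect this last step to be the main obstacle: turning the pointwise sub-Gaussian estimate for $\mathcal{R}$ --- which comes from the gradient bound $(E\!D)$, whose scale vanishes at $\thetavs$ --- into a uniform-in-$\thetav$ bound over the full local set, with the sharp residual quadratic in $1 + \sqrt{\xx + 3p}$, and in checking that $\rdomega = 3\nunu\rhor_0\varepsilon\rr$ is exactly enough penalty for the peeling to close at all radii $s \le \rr$. Everything else --- the Taylor expansions, the algebra with $\DPb$, $\DPm$ and $\VPc$, and the reduction to the two supremum functionals \eqref{errdef} --- is routine bookkeeping.
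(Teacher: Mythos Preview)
Your decomposition \eqref{Zsplit}, the treatment of the deterministic part via \eqref{LmgfquadEL}, and the resulting definition of $\errb(\rr),\errm(\rr)$ coincide with what the paper does; this bookkeeping part is exactly right.

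The divergence is in how you handle the stochastic remainder $\mathcal{R}$. You propose to first derive a \emph{pointwise} sub-Gaussian bound for $\mathcal{R}(\thetav)$ from the integral representation and Jensen, and then run a hand-made peeling over concentric $\VPc$-shells, invoking Theorem~\ref{Thitting} as the packaged tool. Two remarks. First, the reference is mis-aimed: Theorem~\ref{Thitting} is a hitting-probability statement for $\UP(\ups) - \Ldrift(\ups) \ge 0$ and delivers only a $0/1$-type conclusion, not the quantitative bound $\rdomega(1+\sqrt{\xx+3p})^{2}$ you need for \eqref{errbzrr}. The right off-the-shelf result here is Theorem~\ref{Tsmoothpenlc} (or its precursor Corollary~\ref{CTsuprUP}), which bounds exactly a penalised supremum of a smooth process. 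Second, the integral/Jensen detour is unnecessary: since $\nabla\mathcal{R}(\thetav) = \nabla\zeta(\thetav) - \nabla\zeta(\thetavs)$, condition $(E\!D)$ says precisely that the rescaled field
\[
    \UP(\thetav,\thetavs) \eqdef \frac{1}{\rhor_{0}\varepsilon\rr}\,\mathcal{R}(\thetav)
\]
satisfies the gradient condition $(\CS\! D)$ with $\VV(\thetav) \equiv \VPc$ on all of $\Thetas(\rr)$. This is exactly how the paper proceeds: one line to check $(\CS\! D)$, then Theorem~\ref{Tsmoothpenlc} gives
$\P\bigl(\sup_{\Thetas(\rr)}\{\tfrac{1}{3\nunu}\UP - \tfrac{1}{2}\|\VPc(\cdot-\thetavs)\|^{2}\} \ge (1+\sqrt{\xx+3p})^{2}\bigr)\le \ex^{-\xx}$,
and multiplying through by $3\nunu\rhor_{0}\varepsilon\rr \le \rdomega$ yields \eqref{errbzrr} directly. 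Your peeling-over-shells sketch would, if carried out, essentially reprove Theorem~\ref{Tsmoothpenlc}; it is correct in spirit but the pointwise bound alone is not enough for a supremum over a $p$-dimensional set --- you still need increment control to chain within each shell, and that again comes from $\nabla\mathcal{R}$, i.e.\ from $(E\!D)$, which brings you back to the gradient route anyway.
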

\begin{proof}
Consider for fixed \( \rr \) and \( \rdb = (\rddelta,\rdomega) \) the quantity
\begin{EQA}
\label{Delta1loc}
    \errb(\rr)
    & \eqdef &
    \sup_{\thetav \in \Thetas(\rr)}
    \bigl\{ 
        Z(\thetav,\thetavs) - \E Z(\thetav,\thetavs) 
        - (\thetav - \thetavs)^{\T} \nabla Z(\thetav,\thetavs) 
        - \frac{\rdomega}{2} \| \VPc (\thetav - \thetavs) \|^{2} 
    \bigr\} .
\end{EQA}
As \( \rddelta \ge \rddelta_0\varepsilon\rr \), it holds 
\( - M(\thetav) \ge (1 - \rddelta) \DPc^{2} \) and 
\( Z(\thetav,\thetavs) - \Zab(\thetav,\thetavs) \le \errb(\rr) \).
Moreover, in view of \( \nabla M(\thetavs) = 0 \), the definition of 
\( \errb(\rr) \) can be rewritten as
\begin{EQA}
\label{Delta1loc}
    \errb(\rr)
    & = &
    \sup_{\thetav \in \Thetas(\rr)}
    \bigl\{ 
        \zeta(\thetav)-\zeta(\thetavs)- (\thetav - \thetavs)^{\T} \nabla \zeta(\thetavs) 
        - \frac{\rdomega}{2} \| \VPc (\thetav - \thetavs) \|^{2} 
    \bigr\} .
\end{EQA}    
Now the claim of the theorem can be easily reduced to an exponential bound for 
the quantity \( \errb(\rr) \).
We apply Theorem~\ref{Tsmoothpenlc} to the process 
\begin{EQA}[c]
    \UP(\thetav,\thetavs) 
    = 
    \frac{1}{\rhor_0\varepsilon\rr}
    \bigl\{ 
        \zeta(\thetav)-\zeta(\thetavs) - (\thetav - \thetavs)^{\T} \nabla \zeta(\thetavs) 
    \bigr\},
    \qquad 
    \thetav \in \Thetas(\rr),
\label{UPloce}
\end{EQA}
and \( \VVc = \VPc \).
Condition \( (\CS\! D) \) follows from 
\( (E\!D) \) with the same \( \nunu \) and \( \gmb \) in view of 
\( \nabla \UP(\thetav,\thetavs) 
= \bigl\{ \nabla \zeta(\thetav) - \nabla \zeta(\thetavs) \bigr\} / \rhor\varepsilon\rr \).
So, the conditions of Theorem~\ref{Tsmoothpenlc} are fulfilled yielding 
\eqref{errbzrr} in view of \( \rdomega \ge 3 \nunu \, \rhor_0 \varepsilon\rr \). 
\end{proof}
\begin{lemma}
\label{LLbreveloc}
It holds 
\begin{EQA}
    \sup_{\thetav \in \Theta} \Zab(\thetav,\thetavs)\leq\sup_{\thetav \in \R^p} \Zab(\thetav,\thetavs)
    &=&
    \| \xivb \|^{2}/2 
\label{supLat}
\end{EQA}    
\end{lemma}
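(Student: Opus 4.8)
The plan is to treat Lemma~\ref{LLbreveloc} as an elementary unconstrained quadratic maximisation, since the random field \( \Zab(\thetav,\thetavs) \) is by construction exactly quadratic in \( \thetav \). The first inequality is immediate: \( \Theta \subseteq \R^{p} \), so the supremum over the larger set \( \R^{p} \) dominates the supremum over \( \Theta \). Everything therefore reduces to evaluating \( \sup_{\thetav \in \R^{p}} \Zab(\thetav,\thetavs) \).

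For that supremum I would change variables. Recall from \eqref{bLquadloc}--\eqref{xivalpsn} that
\(
\Zab(\thetav,\thetavs) = \xivb^{\T} \DPb(\thetav - \thetavs) - \| \DPb(\thetav - \thetavs) \|^{2}/2
\),
with \( \xivb = \DPb^{-1}\nabla G(X,\thetavs) \). Under the standing assumption that \( \DPb^{2} = \DPc^{2}(1-\rddelta) - \rdomega\VPc^{2} \) is (strictly) positive definite, the map \( \thetav \mapsto u \eqdef \DPb(\thetav - \thetavs) \) is a linear bijection of \( \R^{p} \) onto itself, so
\[
\sup_{\thetav \in \R^{p}} \Zab(\thetav,\thetavs)
= \sup_{u \in \R^{p}} \bigl\{ \xivb^{\T} u - \| u \|^{2}/2 \bigr\}.
\]
Completing the square gives \( \xivb^{\T} u - \| u \|^{2}/2 = \| \xivb \|^{2}/2 - \| u - \xivb \|^{2}/2 \le \| \xivb \|^{2}/2 \), with equality precisely at \( u = \xivb \), i.e. at \( \thetav = \thetavs + \DPb^{-1}\xivb \). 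Hence the unconstrained supremum equals \( \| \xivb \|^{2}/2 \), which is the claimed identity; combined with the first step this proves the lemma.

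There is essentially no hard part here; the only point requiring a word of care is that the change of variables is legitimate, which is exactly the implicit positivity assumption \( \DPb^{2} \ge 0 \) (indeed \( \DPb^{2} > 0 \)) already imposed in the setup — without invertibility of \( \DPb \) the quantity \( \xivb = \DPb^{-1}\nabla G(X,\thetavs) \) would not even be defined. One should also note that the maximiser \( \thetavs + \DPb^{-1}\xivb \) need not belong to \( \Theta \), which is precisely why only the inequality (and not equality) holds for the constrained supremum over \( \Theta \); this is harmless for the subsequent argument, where the unconstrained bound \( \| \xivb \|^{2}/2 \) is all that is used.
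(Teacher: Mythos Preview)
Your argument is correct: the inequality is immediate from \( \Theta \subseteq \R^{p} \), and the equality follows by the change of variables \( u = \DPb(\thetav - \thetavs) \) together with completing the square, which is legitimate under the standing assumption that \( \DPb^{2} \) is positive definite (so that \( \xivb = \DPb^{-1}\nabla G(X,\thetavs) \) is well defined). The paper itself states Lemma~\ref{LLbreveloc} without proof, treating it as elementary; your write-up is exactly the natural justification the paper leaves implicit.
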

\begin{lemma}
\label{Lxivgap}
Define \( \xiv \eqdef \DPc^{-1} \nabla \zeta(\thetavs). \) Suppose \( (V\!I) \) and let 
\( \tau_{\rd} \eqdef \varepsilon r_0(\rddelta_0 + 3\nunu \rhor_0\fis^{2}) < 1 \). 
Then
\begin{EQA}[c]
    \| \xivb \|^{2} - \| \xiv \|^{2}
    \le 
    \frac{\tau_{\rd}}{1 - \tau_{\rd}} \| \xiv \|^{2},
    \quad 
    \| \xiv \|^{2} - \| \xivm \|^{2}
    \le 
    \frac{\tau_{\rd}}{1 + \tau_{\rd}} \| \xiv \|^{2}.
\label{xivbccmpa}
\end{EQA}    
\end{lemma}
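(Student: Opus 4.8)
The plan is to reduce both inequalities to a two-sided operator comparison between the matrices \( \DPb^{-2} \), \( \DPc^{-2} \) and \( \DPm^{-2} \). The key observation is that \( \xivb \), \( \xiv \) and \( \xivm \) are three linear images of one and the same vector \( g \eqdef \nabla\zeta(\thetavs) = \nabla G(X,\thetavs) \), the last identity holding because \( \nabla M(\thetavs) = 0 \) by \( (G\!C) \). Indeed \( \xivb = \DPb^{-1} g \), \( \xiv = \DPc^{-1} g \) and \( \xivm = \DPm^{-1} g \), so that \( \| \xivb \|^{2} = g^{\T} \DPb^{-2} g \), \( \| \xiv \|^{2} = g^{\T} \DPc^{-2} g \) and \( \| \xivm \|^{2} = g^{\T} \DPm^{-2} g \). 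Hence any sandwich \( c_{1} \DPc^{-2} \preceq \DPb^{-2} \preceq c_{2} \DPc^{-2} \) transfers at once, upon multiplying by \( g^{\T} \) on the left and \( g \) on the right, into \( c_{1} \| \xiv \|^{2} \le \| \xivb \|^{2} \le c_{2} \| \xiv \|^{2} \), and similarly for \( \xivm \).

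First I would record the elementary consequence of \( (V\!I) \). Since \( \VPc^{2} \preceq \fis^{2} \DPc^{2} \) and \( \DPc^{2} = -\nabla^{2} M(\thetavs) \succ 0 \), the choice \( \rddelta = \rddelta_{0} \varepsilon r_{0} \), \( \rdomega = 3 \nunu \rhor_{0} \varepsilon r_{0} \) (the one underlying the definition of \( \tau_{\rd} \)) gives
\begin{EQA}[c]
\DPb^{2} = \DPc^{2}(1-\rddelta) - \rdomega \VPc^{2} \succeq \bigl( 1 - \rddelta - \rdomega \fis^{2} \bigr) \DPc^{2} = (1 - \tau_{\rd})\, \DPc^{2} \succ 0,
\end{EQA}
and likewise \( \DPm^{2} = \DPc^{2}(1+\rddelta) + \rdomega \VPc^{2} \preceq (1 + \tau_{\rd}) \DPc^{2} \), while trivially \( \DPb^{2} \preceq \DPc^{2} \preceq \DPm^{2} \). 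The only thing being used here is that the coefficient \( \rddelta + \rdomega \fis^{2} \) produced by \( \VPc^{2} \preceq \fis^{2} \DPc^{2} \) equals exactly the \( \tau_{\rd} \) of the statement; positivity \( \DPb^{2} \succ 0 \) is already imposed in Proposition~\ref{TapproxLL}, which makes the inversion step below legitimate.

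Finally I would invoke operator anti-monotonicity of matrix inversion: if \( 0 \prec A \preceq B \) then \( B^{-1} \preceq A^{-1} \). Applied to \( (1-\tau_{\rd}) \DPc^{2} \preceq \DPb^{2} \) this gives \( \DPb^{-2} \preceq (1-\tau_{\rd})^{-1} \DPc^{-2} \), hence \( \| \xivb \|^{2} \le (1-\tau_{\rd})^{-1} \| \xiv \|^{2} \), and subtracting \( \| \xiv \|^{2} \) yields the first inequality with constant \( \tau_{\rd}/(1-\tau_{\rd}) \). Symmetrically, \( \DPm^{2} \preceq (1+\tau_{\rd}) \DPc^{2} \) gives \( \DPm^{-2} \succeq (1+\tau_{\rd})^{-1} \DPc^{-2} \), so \( \| \xivm \|^{2} \ge (1+\tau_{\rd})^{-1} \| \xiv \|^{2} \), which rearranges to \( \| \xiv \|^{2} - \| \xivm \|^{2} \le \tau_{\rd}(1+\tau_{\rd})^{-1} \| \xiv \|^{2} \). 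There is no real obstacle: once the linear-image observation is made the argument is routine linear algebra, and the only care needed is the bookkeeping of the constants together with the appeal to the first-order optimality condition \( \nabla M(\thetavs) = 0 \).
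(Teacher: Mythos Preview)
Your argument is correct and is precisely the intended one. The paper actually omits the proof of this lemma (it is stated and immediately followed by Proposition~\ref{LLbrevelocm}), presumably because the computation is routine once one notices, as you did, that \( \xivb, \xiv, \xivm \) are all images of the same vector \( \nabla\zeta(\thetavs) \) under \( \DPb^{-1}, \DPc^{-1}, \DPm^{-1} \), and that \( (V\!I) \) combined with the choice \( \rddelta = \rddelta_{0}\varepsilon r_{0} \), \( \rdomega = 3\nunu\rhor_{0}\varepsilon r_{0} \) yields the sandwich \( (1-\tau_{\rd})\DPc^{2} \preceq \DPb^{2} \preceq \DPc^{2} \preceq \DPm^{2} \preceq (1+\tau_{\rd})\DPc^{2} \); operator anti-monotonicity of inversion then finishes the job.
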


\begin{proposition}
\label{LLbrevelocm}
Let \( (E\!D) \) hold with \( \nunu = 1. \)
Then
\( \E \| \xiv \|^{2} \le \dimA \), and for each \( \xx>0 \)
\begin{EQA}
    \P\bigl( \| \xiv \|^{2} \ge \lambda_0\cdot\zz(\xx,\BB) \bigr)
    & \le &
    2 \ex^{-\xx},
\label{PxivbzzBB}
\end{EQA}    
where \( \zz(\xx,\BB) \) is defined by
\begin{EQA}
\label{PzzxxpB}
    \zz(\xx,\BB)
    & \eqdef &
    \begin{cases}
        \dimA+2 \vA \xx^{1/2}, &  \xx \le \vA/18 , \\
        \dimA+6 \xx & \vA/18 < \xx. \\
    \end{cases}
\label{zzxxppdBl}
\end{EQA}    
\end{proposition}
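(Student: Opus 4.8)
The plan is to turn the statement into a deviation bound for the squared Euclidean norm of a sub-Gaussian random vector whose covariance proxy is exactly $\BB$, and then to prove that bound by the standard Gaussian-decoupling (Laplace-transform) trick. The first step is purely algebraic. With $\nunu = 1$ the operative exponential-moment hypothesis is the bound for the linear functionals of $\nabla\zeta(\thetavs)$, namely $\log\E\exp\{\lambda\,\gammav^{\T}\nabla\zeta(\thetavs)/\|\VPc\gammav\|\} \le \lambda^{2}/2$ for all $\gammav$ and all $\lambda$ (this is condition $(E\rr)$ read at $\thetav = \thetavs$; note that the decoupling below needs the bound for \emph{all} $\lambda$, which is why $(E\rr)$ rather than $(E\!D)$, valid only for $|\lambda|\le\gmb$, is the right hypothesis here). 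Putting $\lambda = s\|\VPc\gammav\|$ and then replacing $\gammav$ by $\DPc^{-1}\gammav$ rewrites this as
\[
    \log\E\exp\bigl\{ s\,\gammav^{\T}\xiv \bigr\}
    \ \le\ \tfrac12\, s^{2}\, \gammav^{\T}\BB\,\gammav,
    \qquad \gammav\in\R^{\dimp},\ \ s\in\R,
\]
because $\xiv = \DPc^{-1}\nabla\zeta(\thetavs)$ and $\DPc^{-1}\VPc^{2}\DPc^{-1} = \BB$. Expanding this inequality to second order at $s=0$ gives $\E\xiv = 0$ and $\E[\xiv\xiv^{\T}] \preceq \BB$, whence $\E\|\xiv\|^{2} = \tr\E[\xiv\xiv^{\T}] \le \tr\BB = \dimA$: the first assertion.

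For the tail I would bound the exponential moment of $\|\xiv\|^{2}$. Using $\exp\!\bigl(\tfrac{\mu}{2}\|\xiv\|^{2}\bigr) = \E_{g}\exp\!\bigl(\sqrt{\mu}\,g^{\T}\xiv\bigr)$ with $g\sim N(0,I_{p})$ independent of $\xiv$, Fubini, and the displayed bound applied conditionally on $g$,
\[
    \E\exp\!\bigl(\tfrac{\mu}{2}\|\xiv\|^{2}\bigr)
    \ \le\ \E_{g}\exp\!\bigl(\tfrac{\mu}{2}\, g^{\T}\BB\,g\bigr)
    \ =\ \det\!\bigl(I - \mu\BB\bigr)^{-1/2},
    \qquad 0\le \mu < \lambda_{0}^{-1}.
\]
Denoting the eigenvalues of $\BB$ by $\mu_{1},\dots,\mu_{p}$ and using $-\log(1-t)\le t + \tfrac{t^{2}}{2(1-t)}$ for $t\in[0,1)$ together with $\sum_{j}\mu_{j} = \dimA$ and $\sum_{j}\mu_{j}^{2} = \tr(\BB^{2}) = \vA^{2}/2$, this yields
\[
    \log\E\exp\!\bigl(\tfrac{\mu}{2}\|\xiv\|^{2}\bigr)
    \ \le\ \tfrac{\mu}{2}\,\dimA \ +\ \frac{\mu^{2}\,\vA^{2}}{8\,(1-\mu\lambda_{0})},
    \qquad 0\le \mu<\lambda_{0}^{-1}.
\]

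The proof is then finished by the exponential Chernoff inequality, $\P(\|\xiv\|^{2}\ge t)\le \exp\{-\tfrac{\mu}{2}(t-\dimA) + \mu^{2}\vA^{2}/(8(1-\mu\lambda_{0}))\}$, followed by an optimisation of $\mu$ over $[0,\lambda_{0}^{-1})$. For $t = \lambda_{0}\,\zz(\xx,\BB)$ one takes $\mu$ of order $\sqrt{\xx}/(\lambda_{0}\vA)$ in the Gaussian regime $\xx\le\vA/18$ — where $1-\mu\lambda_{0}$ is kept bounded away from $0$, the quadratic term dominates, and the exponent falls below $-\xx$ at the level $\dimA + 2\vA\sqrt{\xx}$ — and $\mu$ a fixed fraction of $\lambda_{0}^{-1}$ in the exponential regime $\xx>\vA/18$, which produces the linear level $\dimA + 6\,\xx$; the extra factor $2$ is slack absorbing the join of the two branches. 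I expect the only genuinely delicate point to be exactly this bookkeeping: keeping the denominator $1-\mu\lambda_{0}$ under control, matching the two branches at the threshold $\xx = \vA/18$, and pinning down the explicit constants in the two cases; everything else is the routine Bernstein-type computation, and in fact the displayed quadratic-form deviation inequality is by now standard in the literature on exponential bounds, of which this Proposition is the specialisation to $\nunu = 1$.
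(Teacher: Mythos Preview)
Your approach is sound and is essentially the engine behind the result the paper invokes. The paper's own proof is extremely terse: for the expectation bound it uses assumption $(V\!I)$ directly to get $\Var\{\nabla\zeta(\thetavs)\}\preceq\VPc^{2}$ and hence $\E\|\xiv\|^{2}=\tr\bigl(\DPc^{-1}\Var\{\nabla\zeta(\thetavs)\}\DPc^{-1}\bigr)\le\tr\BB=\dimA$; for the tail it simply appeals to Corollary~\ref{CTxivqLDAB} in the Appendix (whose proof is in turn deferred to \citet{S}). Your Gaussian-decoupling-plus-Chernoff argument is the standard route to exactly that type of quadratic-form deviation inequality, so the two proofs coincide in substance; you are just supplying what the paper outsources. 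Deriving the variance bound by Taylor-expanding the exponential-moment inequality is slightly roundabout compared with reading it off $(V\!I)$, but of course valid.

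Two remarks. First, your diagnosis of the hypothesis is correct: the stated assumption $(E\!D)$ concerns only the increment $\nabla\zeta(\thetav)-\nabla\zeta(\thetavs)$ and says nothing about $\nabla\zeta(\thetavs)$ itself; what is actually used (both by you and, implicitly, by the paper via the Appendix condition $\log\E\exp(\gammav^{\T}\xiv)\le\|\gammav\|^{2}/2$) is a sub-Gaussian bound on $\nabla\zeta(\thetavs)$, which is $(E\rr)$ read at $\thetav=\thetavs$. Second, your Chernoff optimisation naturally produces the level $\dimA+2\vA\sqrt{\xx}$ in the Gaussian branch and $\dimA+\mathrm{const}\cdot\lambda_{0}\,\xx$ in the exponential branch, with $\lambda_{0}$ entering only through the constraint $\mu<\lambda_{0}^{-1}$; the blanket prefactor $\lambda_{0}$ in front of $\zz(\xx,\BB)$ in the Proposition (in particular the term $\lambda_{0}\dimA$) does not fall out of the optimisation you sketch and instead reflects the Appendix's rescaling convention ``replace $\dimA$ and $\vA^{2}$ with $\dimA/\lambdaB$ and $\vA^{2}/\lambdaB$''. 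The paper is loose on this point, and your write-up would be cleaner if you state the bound in the form your argument actually yields rather than forcing the $\lambda_{0}$-prefactor.
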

\begin{proof}
It holds
\begin{EQA}
    \E \| \xiv \|^{2}
    &=&
    \E \tr  \xiv \xiv^{\T}
    \\
    &=&
    \tr \DPc^{-1} \bigl[ \E \nabla \zeta(\thetavs) \{ \nabla \zeta(\thetavs) \}^{\T} \bigr] 
    \DPc^{-1}
    =
    \tr \bigl[ \DPc^{-2} \Var \bigl\{ \nabla \zeta(\thetavs) \bigr\} \bigr]
\label{Exiv2tr}
\end{EQA}    
and  
\( \gammav^{\T} \Var \bigl\{ \nabla \zeta(\thetavs) \bigr\} \gammav 
\le \gammav^{\T} \VPc^{2} \gammav \) and thus, 
\( \E \| \xiv \|^{2} \le \dimA \).
The deviation bound \eqref{PxivbzzBB} is proved in Corollary~\ref{CTxivqLDAB}.
\end{proof}
\section{Appendix}
The proofs of the results below can be found in Appendix A and Appendix B of \citet{S}.
\subsection{Deviation probability for quadratic forms}
\label{Chgqform}
\label{Sprobabquad}
Assume that
\begin{EQA}[c]
    \log \E \exp\bigl( \gammav^{\T} \xiv \bigr) 
    \le 
    \| \gammav \|^{2}/2,
    \qquad 
    \gammav \in \R^{\dimp}, \, \| \gammav \| \le \gm .
\label{expgamgm}
\end{EQA}
This section presents a general exponential bound for the probability 
\( \P\bigl( \| \BB \xiv \| > \yy \bigr) \)  with a given matrix 
\( \BB \) and a vector \( \xiv \) obeying the condition \eqref{expgamgm}. 
We assume that \( \BB \) is symmetric. 
Define important characteristics of \( \BB \)
\begin{EQA}[c]
    \dimA = \tr (\BB^{2}) , 
    \qquad 
    \vA^{2} = 2 \tr(\BB^{4}),
    \qquad 
    \lambdaB \eqdef \| \BB^{2} \|_{\infty} \eqdef \lambda_{\max}(\BB^{2}) .
\label{dimAvAlb}
\end{EQA}
For simplicity of formulation we suppose that \( \lambdaB = 1 \),
otherwise one has to replace \( \dimA \) and \( \vA^{2} \) with 
\( \dimA/\lambdaB \) and \( \vA^{2}/\lambdaB \).
Let \( \gm \) be given in \eqref{expgamgm}.
Define 
\( \wwc \) by the equation
\begin{EQA}[c]
    \frac{\wwc(1+\wwc)}{(1+\wwc^{2})^{1/2}}
    =
    \gm \dimA^{-1/2} .
\label{wc212A}
\end{EQA}
Define also \( \muc = \wwc^{2}/(1+\wwc^{2}) \wedge 2/3 \).
Note that \( \wwc^{2} \ge 2 \) implies \( \muc = 2/3 \).
Further define
\begin{EQA}
    \yyc^{2} = (1 + \wwc^{2}) \dimA,
    \qquad 
    2 \xxc
    & = &
    \muc \yyc^{2} + \log \det\{ \Id_{\dimp} - \muc \BB^{2} \} .
\label{xxcyycA}
\end{EQA}
\begin{theorem}
\label{TxivqLDA}
Let a random vector \( \xiv \) in \( \R^{\dimp} \) fulfill \eqref{expgamgm}.
Then for each \( \xx < \xxc \)
\begin{EQA}
    \P\bigl( 
        \| \BB \xiv \|^{2}-\E \| \BB \xiv \|^{2}> (2 \vA \xx^{1/2}) \vee (6 \xx),
        \| \BB \xiv \| \le \yyc
    \bigr)
    & \le &
    2 \exp( - \xx ) .
\label{expxiboA}
\end{EQA}    
Moreover, for \( \yy \ge \yyc \), with
\( \gmc = \gm - \sqrt{\muc \dimA} = \gm \wwc/(1+\wwc) \),
it holds
\begin{EQA}
    \P\bigl( \| \BB \xiv \| > \yy \bigr)
    & \le &
    8.4 \exp\bigl( - \xxc - \gmc (\yy - \yyc)/2 \bigr) .
\label{expxibogA}
\end{EQA}
\end{theorem}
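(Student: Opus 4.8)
The statement to prove is Theorem~\ref{TxivqLDA}, a deviation bound for quadratic forms \( \|\BB\xiv\|^2 \) under the exponential moment condition \eqref{expgamgm}. Here is how I would proceed.

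\textbf{Overall strategy.} The plan is to use the classical Chernoff/Laplace-transform method applied to the quadratic form \( \|\BB\xiv\|^2 = \xiv^\T\BB^2\xiv \), combined with a Gaussian linearisation (Hubbard--Stratonovich / Gaussian chaining) trick to reduce the quadratic exponent to a linear one so that \eqref{expgamgm} can be invoked. Concretely, for a parameter \( \mu>0 \) one writes, via an auxiliary standard Gaussian vector \( \gammav \sim \mathcal N(0,\Id_{\dimp}) \) independent of \( \xiv \),
\begin{EQA}[c]
\E\exp\bigl(\tfrac{\mu}{2}\|\BB\xiv\|^2\bigr)
=\E\exp\bigl(\sqrt{\mu}\,\gammav^\T\BB\xiv\bigr)
\Big/ \text{(Gaussian normalisation)} ,
\end{EQA}
then conditions on \( \gammav \): by \eqref{expgamgm} the inner expectation over \( \xiv \) is at most \( \exp(\tfrac12\mu\|\BB\gammav\|^2) \) \emph{provided} \( \sqrt{\mu}\,\|\BB\gammav\|\le\gm \), and one integrates the Gaussian back out. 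This produces a bound of the form \( \det(\Id-\mu\BB^2)^{-1/2} \) times a correction coming from the event \( \{\sqrt\mu\|\BB\gammav\|>\gm\} \); the quantity \( \xxc \) in \eqref{xxcyycA} is exactly chosen so that this correction is controlled and the truncation \( \|\BB\xiv\|\le\yyc \) in \eqref{expxiboA} absorbs the part of the mass where the linearisation fails.

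\textbf{Key steps, in order.} First, I would establish the centred bound: compute \( \E\|\BB\xiv\|^2 = \tr(\BB^2)=\dimA \) (using the quadratic moment bound implicit in \eqref{expgamgm}) and write the log-Laplace transform of \( \|\BB\xiv\|^2-\dimA \) on the truncated event. Second, optimise over \( \mu\in(0,\muc] \): expanding \( -\tfrac12\log\det(\Id-\mu\BB^2) = \tfrac{\mu}{2}\dimA + \tfrac{\mu^2}{4}\cdot 2\tr(\BB^4)+\ldots \le \tfrac{\mu}{2}\dimA + \tfrac{\mu^2}{4}\vA^2/(1-\mu) \) (using \( \lambdaB=1 \)) gives a Bernstein-type bound: the exponent behaves like \( \mu^2\vA^2/4 \) for small deviations and like a linear-in-\( \mu \) term for large ones, whence the two-regime form \( (2\vA\xx^{1/2})\vee(6\xx) \) by the standard split at \( \xx=\vA/18 \) (the constants \( 2,6 \) come from tracking \( 1-\mu\le 1-2/3 \)). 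Third, for the tail \eqref{expxibogA} with \( \yy\ge\yyc \), I would not centre but rather use a fixed \( \mu=\muc \) and Markov: \( \P(\|\BB\xiv\|>\yy) \le \exp(-\tfrac{\muc}{2}\yy^2)\,\E\exp(\tfrac{\muc}{2}\|\BB\xiv\|^2) \) on the bulk, and on the complementary event where the Gaussian linearisation truncation bites, peel off \( \|\BB\xiv\| \) beyond \( \yyc \) using the \emph{linear} sub-Gaussian bound with the reduced constant \( \gmc=\gm-\sqrt{\muc\dimA} \); this is where the slack \( \gm\wwc/(1+\wwc) \) and the numerical constant \( 8.4 \) appear.

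\textbf{Main obstacle.} The delicate point is the interplay between the truncation level \( \yyc \), the Gaussian-linearisation constraint \( \sqrt\mu\|\BB\gammav\|\le\gm \), and the choice of \( \wwc \) via \eqref{wc212A}: one must verify that on \( \{\|\BB\xiv\|\le\yyc\} \) the Laplace transform is genuinely finite and that the discarded event contributes only a constant multiplicative factor (the \( 2 \) in \eqref{expxiboA}, the \( 8.4 \) in \eqref{expxibogA}) rather than something that degrades the rate. Getting these constants right requires carefully bounding \( \det(\Id-\muc\BB^2) \) from below and estimating a Gaussian tail \( \P(\|\BB\gammav\|>\gm/\sqrt\mu) \) in terms of \( \xxc \); this bookkeeping, rather than any conceptual difficulty, is the real work. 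Since the excerpt states that the full proof is in Appendix~A--B of \citet{S}, I would reference that for the constant-tracking and present the linearisation-plus-Chernoff skeleton above as the conceptual core.
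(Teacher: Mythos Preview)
The paper does not prove Theorem~\ref{TxivqLDA} at all: the Appendix opens with ``The proofs of the results below can be found in Appendix~A and Appendix~B of \citet{S}'' and then merely states the theorem. So there is no in-paper proof to compare against, and you have correctly identified this at the end of your proposal.

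Your sketch is the standard route and is, in outline, what \citet{S} does: the Laplace transform of \( \|\BB\xiv\|^{2} \) is controlled via the Gaussian linearisation identity \( \exp(\tfrac{\mu}{2}\|\BB\xiv\|^{2}) = \E_{\gammav}\exp(\sqrt{\mu}\,\gammav^{\T}\BB\xiv) \), the condition \eqref{expgamgm} is applied on the event \( \{\sqrt{\mu}\,\|\BB\gammav\|\le\gm\} \), and integrating the Gaussian back out produces the \( \det(\Id-\mu\BB^{2})^{-1/2} \) factor whose logarithm appears in the definition \eqref{xxcyycA} of \( \xxc \). The truncation \( \|\BB\xiv\|\le\yyc \) is there precisely to give a deterministic bound on the complementary Gaussian event, and the large-deviation regime \eqref{expxibogA} is handled by freezing \( \mu=\muc \) and peeling the excess \( \yy-\yyc \) with the residual linear sub-Gaussian budget \( \gmc \). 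One small slip: you write \( \E\|\BB\xiv\|^{2}=\tr(\BB^{2})=\dimA \), but \eqref{expgamgm} only gives \( \Var(\gammav^{\T}\xiv)\le\|\gammav\|^{2} \), hence \( \E\|\BB\xiv\|^{2}\le\dimA \); since the theorem is stated with the true expectation this does not affect the argument, but be careful not to claim equality. Beyond that, your identification of the constant-tracking around \( \wwc,\muc,\yyc \) as the real work is accurate, and deferring those details to \citet{S} is exactly what the present paper does.
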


Let us now describe the value \( \zz(\xx,\BB) \) ensuring a small value for the large deviation probability 
\( \P\bigl( \| \BB \xiv \|^{2} > \zz(\xx,\BB) \bigr) \).
For ease of formulation, we suppose that \( \gm^{2} \ge 2 \dimA \) yielding 
\( \muc^{-1} \le 3/2 \).
The other case can be easily adjusted. 

\begin{corollary}
\label{CTxivqLDAB}
Let \( \xiv \) fulfill \eqref{expgamgm} with \( \gm^{2} \ge 2 \dimA \). 
Then it holds for \( \xx \le \xxc \) with \( \xxc \) from \eqref{xxcyycA}:
\begin{EQA}
\label{PzzxxpB}
    \P\bigl( \| \BB \xiv \|^{2}-\E \| \BB \xiv \|^{2} \ge \zz(\xx,\BB) \bigr)
    & \le &
    2 \ex^{-\xx} + 8.4 \ex^{-\xxc},
    \\
    \zz(\xx,\BB)
    & \eqdef &
    \begin{cases}
        2 \vA \xx^{1/2}, &  \xx \le \vA/18 , \\
        6 \xx & \vA/18 < \xx \le \xxc .
    \end{cases}
\label{zzxxppdB}
\end{EQA}    
For \( \xx > \xxc \)
\begin{EQA}
    \P\bigl( \| \BB \xiv \|^{2} \ge \zzc(\xx,\BB) \bigr)
    & \le &
    8.4 \ex^{-\xx},
    \qquad 
    \zzc(\xx,\BB)
    \eqdef 
    \bigl| \yyc + 2 (\xx - \xxc)/\gmc \bigr|^{2} .
\label{zzcxxppdB}
\end{EQA}    
\end{corollary}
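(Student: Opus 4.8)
The plan is to read the Corollary straight off Theorem~\ref{TxivqLDA}: the displays \eqref{expxiboA} and \eqref{expxibogA} already contain the two deviation statements, so the only work is (i) to collapse the two-branch bound \((2\vA\xx^{1/2})\vee(6\xx)\) into the single branch recorded in \eqref{zzxxppdB}, and (ii) for \(\xx\le\xxc\), to absorb the exceptional event \(\{\|\BB\xiv\|>\yyc\}\) that the conditional bound \eqref{expxiboA} leaves uncontrolled.

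\textbf{The range \(\xx\le\xxc\).} By a union bound, \(\P(\|\BB\xiv\|^{2}-\E\|\BB\xiv\|^{2}\ge\zz(\xx,\BB))\) is at most \(\P(\|\BB\xiv\|^{2}-\E\|\BB\xiv\|^{2}\ge\zz(\xx,\BB),\ \|\BB\xiv\|\le\yyc)+\P(\|\BB\xiv\|>\yyc)\). The last term is bounded by \(8.4\,\ex^{-\xxc}\) on applying \eqref{expxibogA} with \(\yy=\yyc\), where the exponent \(-\xxc-\gmc(\yy-\yyc)/2\) collapses to \(-\xxc\). For the first term I would check the elementary comparison \(\zz(\xx,\BB)\ge(2\vA\xx^{1/2})\vee(6\xx)\) on \(0<\xx\le\xxc\): the maps \(\xx\mapsto 2\vA\xx^{1/2}\) and \(\xx\mapsto 6\xx\) cross once, with the square-root term larger for small \(\xx\), so the branch selected in \eqref{zzxxppdB} at the cut-off \(\xx=\vA/18\) is exactly the maximum of the two; this uses the normalisation \(\lambdaB=1\), which forces \(\vA^{2}=2\tr(\BB^{4})\ge 2\) and thereby locates the crossing point consistently with that cut-off. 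Once this holds, the first event lies inside the event of \eqref{expxiboA} (a routine limiting step reconciling \(\ge\) with the strict \(>\) there), contributing \(2\,\ex^{-\xx}\). Adding the two pieces gives \(2\,\ex^{-\xx}+8.4\,\ex^{-\xxc}\).

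\textbf{The range \(\xx>\xxc\).} Here I would apply \eqref{expxibogA} with the explicit radius \(\yy=\yyc+2(\xx-\xxc)/\gmc\), which satisfies \(\yy\ge\yyc\) precisely because \(\xx>\xxc\). With this choice \(\gmc(\yy-\yyc)/2=\xx-\xxc\), so the exponent in \eqref{expxibogA} is exactly \(-\xx\); since \(\{\|\BB\xiv\|^{2}\ge\yy^{2}\}=\{\|\BB\xiv\|\ge\yy\}\), a further limiting step yields \(\P(\|\BB\xiv\|^{2}\ge\zzc(\xx,\BB))\le 8.4\,\ex^{-\xx}\) with \(\zzc(\xx,\BB)=\yy^{2}=|\yyc+2(\xx-\xxc)/\gmc|^{2}\), which is \eqref{zzcxxppdB}.

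\textbf{Main obstacle.} Apart from substitution into \eqref{expxiboA} and \eqref{expxibogA}, the only point needing real care is the piecewise comparison in the first range, i.e.\ confirming that the single-branch quantity \(\zz(\xx,\BB)\) genuinely dominates \((2\vA\xx^{1/2})\vee(6\xx)\) throughout \(0<\xx\le\xxc\) with the stated threshold \(\vA/18\). This is where the standing normalisation \(\lambdaB=1\) and the hypothesis \(\gm^{2}\ge 2\dimA\) are actually used (the latter via the induced bound \(\muc^{-1}\le 3/2\), which keeps \(\xxc\) and \(\yyc\) in the regime where \eqref{expxiboA}--\eqref{expxibogA} apply); everything else is bookkeeping.
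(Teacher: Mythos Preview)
The paper does not supply its own proof of this corollary: the appendix opens with the remark that all proofs in that section are deferred to \citet{S}. So there is nothing in the present text to compare your argument against directly; what can be assessed is whether your derivation from Theorem~\ref{TxivqLDA} actually goes through.

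Your overall structure is right and is the natural one. For \(\xx\le\xxc\) you split off \(\{\|\BB\xiv\|>\yyc\}\) via \eqref{expxibogA} at \(\yy=\yyc\) and handle the remainder by inclusion into the event of \eqref{expxiboA}; for \(\xx>\xxc\) you substitute \(\yy=\yyc+2(\xx-\xxc)/\gmc\) into \eqref{expxibogA}. The second range is clean.

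The gap is exactly where you flagged it, but your resolution is incorrect. You assert that the cut-off \(\xx=\vA/18\) in \eqref{zzxxppdB} is the crossing point of \(2\vA\xx^{1/2}\) and \(6\xx\), so that \(\zz(\xx,\BB)\) equals their maximum. Solving \(2\vA\sqrt{\xx}=6\xx\) gives the crossing at \(\xx=\vA^{2}/9\), not \(\vA/18\). Under the normalisation \(\lambdaB=1\) that you invoke one has \(\vA^{2}=2\tr(\BB^{4})\ge 2\), hence \(\vA\ge\sqrt{2}>1/2\), and therefore \(\vA/18<\vA^{2}/9\) strictly. On the interval \(\vA/18<\xx<\vA^{2}/9\) the corollary's \(\zz(\xx,\BB)=6\xx\) is \emph{smaller} than \((2\vA\xx^{1/2})\vee(6\xx)=2\vA\xx^{1/2}\); the event \(\{\|\BB\xiv\|^{2}-\E\|\BB\xiv\|^{2}\ge\zz(\xx,\BB)\}\) is then a strict superset of the event in \eqref{expxiboA}, and your inclusion step fails there. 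So the ``elementary comparison'' you planned to check does not hold as stated.

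This leaves two possibilities. Either the threshold \(\vA/18\) in \eqref{zzxxppdB} is a misprint for \(\vA^{2}/9\), in which case your argument is complete as written; or the two branches \(2\vA\xx^{1/2}\) and \(6\xx\) are in fact established separately in the source paper, each with its own \(\ex^{-\xx}\) bound (the factor \(2\) in \eqref{expxiboA} is suggestive of this), so that the \(6\xx\) branch stands on its own and need not dominate \(2\vA\xx^{1/2}\). Deciding between these requires consulting \citet{S}; the present paper does not contain enough to close the gap.
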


\subsection{Some results for empirical processes}
This chapter presents some general results of the theory of empirical processes. We assume some exponential moment conditions on the increments of the process which allows to apply the well developed chaining arguments in Orlicz spaces;  We, however, follow the more recent approach inspired by the notions of generic chaining and majorizing measures due to M. Talagrand; see e.g. \citet{T}. The results are close to that of \citet{B}. We state the results in a slightly different form and present an independent and self-contained proof.
The first result states a bound for local fluctuations of the process 
\( \UP(\ups) \) given on a metric space \( \Ups \).
Then this result will be used for bounding the maximum of the negatively drifted 
process \( \UP(\ups) - \UP(\upss) - \rho \dist^{2}(\ups,\upss) \) over a  
vicinity \( \Upss(\rr) \) of the central point \( \upss \).
The behavior of \( \UP(\ups) \) outside of the local central set \( \Upss(\rr) \) is 
described using the \emph{upper function} method. 
Namely, we construct a multiscale deterministic function \( \up(\mubc,\ups) \) ensuring that 
with probability at least \( 1 - \ex^{-\xx} \) it holds
\( \mubc \UP(\ups) + \up(\mubc,\ups) \le \zz(\xx) \) for all \( \ups \not\in \Upss(\rr) \)
and \( \mubc \in \Mubc \), where \( \zz(\xx) \) grows linearly in \( \xx \). 
\par
Let \( \dist(\ups,\upsc) \) be a semi-distance on \( \Ups \).
We suppose the following condition to hold: 

\begin{description}
\item[\( \bb{(\CS{d})} \)]
    \textit{
    There exist \( \gmb > 0 \), \( \rups > 0 \), 
    \( \nunu \ge 1 \), 
    such that for any  \( \lambda \le \gmb \) and \( \ups,\upsc \in \Ups \)
    with \( \dist(\ups,\upsc) \le \rups \)
    }
\begin{EQA}[c]
\label{ExpboundUP}
    \log \E \exp \biggl\{ 
        \lambda \frac{\UP(\ups) - \UP(\upsc)}{\dist(\ups,\upsc)} 
    \biggr\}
    \le  
    \nunu^{2} \lambda^{2}/2 .
\end{EQA}
\end{description}

Formulation of the result involves a sigma-finite measure \( \mes \) on the space 
\( \Ups \) which is often 
called the \emph{majorizing measure} and used in the \emph{generic chaining} device; 
A typical example of choosing \( \mes \) is the Lebesgue measure on \( \R^{\dimp} \).
Let \( \Upsd \) be a subset of \( \Ups \), 
a sequence \( \rr_{k} \) be fixed with \( \rr_{0} = \diam(\Upsd) \)
and \( \rr_{k} = \rr_{0} 2^{-k} \).
Let also \( \B_{k}(\ups) \eqdef \{ \upsc \in \Upsd: \dist(\ups,\upsc) \le \rr_{k} \} \) 
be the \( \dist \)-ball centered at \( \ups \) of radius 
\( \rr_{k} \) and \( \mes_{k}(\ups) \) denote its \( \mes \)-measure:
\begin{EQA}[c]
    \mes_{k}(\ups)
    \eqdef
    \int_{\B_{k}(\ups)} \mes(d\upsc) 
    =
    \int_{\Upsd} \Ind\bigl( \dist(\ups,\upsc) \le \rr_{k} \bigr) \mes(d\upsc).
\label{meskups}
\end{EQA}  
Denote also 
\begin{EQA}[c]
    M_{k}
    \eqdef
    \max_{\ups \in \Upsd} \frac{\mes(\Upsd)}{\mes_{k}(\ups)} 
    \qquad 
    k \ge 1.
\label{MkUps}
\end{EQA}    
Finally set
\( c_{1} = 1/3 \), \( c_{k} = 2^{-k+2}/3 \) for \( k \ge 2 \), and define
the value \( \entrl(\Upsd) \) by 
\begin{EQA}[c]
    \entrl(\Upsd)
    \eqdef
    \sum_{k=1}^{\infty} c_{k} \log(2 M_{k})
    =
    \frac{1}{3} \log(2M_{1}) + \frac{4}{3} \sum_{k=2}^{\infty} 2^{-k} \log(2 M_{k}) .
\label{entrldef}
\end{EQA}    

\begin{theorem}
\label{TUPUpsd}
Let \( \UP \) be a separable process following to \( (\CS{d}) \). 
If \( \Upsd \) is a \( \dist \)-ball in \( \Ups \) with the center \( \upsd \) and the radius \( \rups \), 
i.e. \( \dist(\ups,\upsd) \le \rups \) for all \( \ups \in \Upsd \), then for
\( \lambda \le \gmd \eqdef \nunu \gmb \)
\begin{EQA}[c]
\label{Upsdboundd}
    \log \E \exp \Bigl\{ \frac{\lambda}{3 \nunu \rups}
        \sup_{\ups \in \Upsd} \bigl| \UP(\ups) - \UP(\upsd) \bigr| 
    \Bigr\}
    \le 
    \lambda^{2}/2 + \entrl(\Upsd).
\end{EQA}
\end{theorem}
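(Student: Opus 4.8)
The plan is to prove Theorem~\ref{TUPUpsd} by a generic chaining argument along the dyadic scales \( \rr_{k} = \rr_{0} 2^{-k} \), with the approximating nets extracted from the majorizing measure \( \mes \), and then to combine the per-scale sub-Gaussian estimates by a weighted Hölder inequality in which the weights \( c_{k} \) enter through the normalisation \( \sum_{k \ge 1} c_{k} = 1 \).

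First I would construct the nets. Since \( \Upsd \) is a \( \dist \)-ball of radius \( \rups \) about \( \upsd \), the singleton \( \Ups_{0} = \{\upsd\} \) is already an \( \rr_{0} \)-net for \( \rr_{0} = \rups \); for \( k \ge 1 \) I take \( \Ups_{k} \supseteq \Ups_{k-1} \) to be a maximal \( \rr_{k} \)-separated subset of \( \Upsd \), so that the \( \Ups_{k} \) form nested \( \rr_{k} \)-nets and, the balls of radius of order \( \rr_{k} \) around the points of \( \Ups_{k} \) being pairwise disjoint with \( \mes \)-measure bounded below through \( \mes_{k}(\cdot) \), the cardinality of \( \Ups_{k} \) is at most a majorizing-measure quantity of the type \( M_{k} \) from \eqref{MkUps}. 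For \( \ups \in \Upsd \) let \( \pi_{k}(\ups) \in \Ups_{k} \) be a nearest point at scale \( k \), chosen so that \( \pi_{k-1} = \pi_{k-1} \circ \pi_{k} \); then every scale-\( k \) link is bounded in length by \( \ell_{k} \), where \( \ell_{k} = \rr_{k} + \rr_{k-1} = 3 \rups 2^{-k} \) for \( k \ge 2 \), while at the top scale the sharper bound \( \ell_{1} = \rups \) holds because \( \pi_{1}(\ups) \in \Upsd \). Separability of \( \UP \) gives the telescoping identity
\[
    \UP(\ups) - \UP(\upsd)
    =
    \sum_{k \ge 1} \bigl\{ \UP(\pi_{k}(\ups)) - \UP(\pi_{k-1}(\ups)) \bigr\},
\]
whence \( \sup_{\ups \in \Upsd} \bigl| \UP(\ups) - \UP(\upsd) \bigr| \le \sum_{k \ge 1} S_{k} \), where \( S_{k} \) is the maximum of \( \bigl| \UP(\pi_{k}(\ups)) - \UP(\pi_{k-1}(\ups)) \bigr| \) over \( \ups \in \Upsd \); since each such link depends on \( \ups \) only through \( \pi_{k}(\ups) \in \Ups_{k} \), this maximum runs over at most \( 2 M_{k} \) random variables, the factor \( 2 \) arising from the absolute value.

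For a fixed scale \( k \), condition \( (\CS{d}) \) makes each link sub-Gaussian with variance proxy \( \nunu^{2} \ell_{k}^{2} \) for exponential parameters up to \( \gmb / \ell_{k} \); this is applicable because \( \ell_{1} \le \rups \) and, for \( k \ge 2 \), \( \ell_{k} = 3 \rups 2^{-k} \le \rups \). A union bound over the \( 2 M_{k} \) links then gives, for admissible \( t \),
\[
    \log \E \exp\{ t\, S_{k} \}
    \le
    \log(2 M_{k}) + \tfrac{1}{2}\, t^{2}\, \nunu^{2}\, \ell_{k}^{2} .
\]
Next I apply Hölder's inequality with the exponents \( c_{k}^{-1} \), legitimate because \( \sum_{k \ge 1} c_{k} = 1 \),
\[
    \E \exp\Bigl\{ \frac{\lambda}{3 \nunu \rups} \sum_{k \ge 1} S_{k} \Bigr\}
    \le
    \prod_{k \ge 1} \Bigl( \E \exp\Bigl\{ \frac{\lambda}{3 \nunu \rups c_{k}} S_{k} \Bigr\} \Bigr)^{c_{k}} ,
\]
and insert the per-scale bound with \( t = \lambda / (3 \nunu \rups c_{k}) \). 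The entropy contributions assemble into \( \sum_{k \ge 1} c_{k} \log(2 M_{k}) = \entrl(\Upsd) \), while the quadratic contributions assemble into \( \tfrac{\lambda^{2}}{2} \sum_{k \ge 1} \ell_{k}^{2} / (9 \rups^{2} c_{k}) \); a direct computation using \( \ell_{1} = \rups \), \( \ell_{k} = 3 \rups 2^{-k} \) and the definition of the \( c_{k} \) shows that this last factor is at most \( 1 \), and the same computation confirms that \( \lambda \le \gmd = \nunu \gmb \) keeps every \( t \) in the admissible range of \( (\CS{d}) \). Adding the two contributions yields \eqref{Upsdboundd}.

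The step I expect to demand the most care is precisely this coarse-scale bookkeeping: one must verify the packing estimate that the maximal \( \rr_{k} \)-separated nets have cardinality controlled by \( M_{k} \) (disjointness of small balls together with the lower bound on \( \mes_{k}(\cdot) \), up to index conventions), and one must organise the chain so that every link has length \( \le \rups \) — so that \( (\CS{d}) \) applies — and so that the weighted sum \( \sum_{k} \ell_{k}^{2} / (9 \rups^{2} c_{k}) \) does not exceed \( 1 \); it is these requirements that force the normalisation \( 3 \nunu \rups \), the particular weights \( c_{k} \), and the device of bounding the top link directly via \( \pi_{1}(\ups) \in \Upsd \). The remaining ingredients — separability to justify the telescoping identity and the passage from finite maxima to the supremum, and the elementary maximal inequality \( \E \exp\{ t \max_{i} \xi_{i} \} \le \sum_{i} \E \exp\{ t \xi_{i} \} \) — are routine.
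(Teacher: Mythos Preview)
The paper does not prove Theorem~\ref{TUPUpsd} itself; the result is stated in the Appendix and its proof is deferred to \citet{S}. So there is no in-paper argument to compare against, and your outline is indeed the standard generic-chaining proof that underlies such statements: telescope along dyadic scales, bound each scale by a union/maximal inequality under \( (\CS{d}) \), and recombine via H\"older with weights \( c_{k} \). Your checks that \( \sum_{k} c_{k}=1 \), that \( \sum_{k}\ell_{k}^{2}/(9\rups^{2}c_{k})\le 1 \), and that \( \lambda\le\nunu\gmb \) keeps every per-scale exponent in the admissible range are all correct.

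The one place your sketch has a genuine gap is the cardinality bound for the nets. If \( \Ups_{k} \) is a \emph{maximal \( \rr_{k} \)-separated} subset of \( \Upsd \), the disjoint balls around its points have radius \( \rr_{k}/2=\rr_{k+1} \), so the packing argument yields \( |\Ups_{k}|\le M_{k+1} \), not \( M_{k} \). Feeding this into your H\"older step produces \( \sum_{k\ge 1} c_{k}\log(2M_{k+1}) \), which is not \( \entrl(\Upsd)=\sum_{k\ge 1} c_{k}\log(2M_{k}) \); since \( c_{k-1}=2c_{k} \) for \( k\ge 3 \), the shifted sum can exceed \( \entrl(\Upsd) \) by roughly a factor of two on the tail, so the discrepancy is not absorbed by your other slack. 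You flagged this as ``up to index conventions'', but it is exactly where the precise constants in the statement come from and it must be resolved, not waved. Two standard fixes: either (i) take \( \Ups_{k} \) to be maximal \( 2\rr_{k} \)-separated, so that the disjoint balls have radius \( \rr_{k} \) and \( |\Ups_{k}|\le M_{k} \), and then redo the link-length and quadratic-budget bookkeeping (the top link still has length \( \le\rups \), but the generic link becomes \( \le 2\rr_{k}+2\rr_{k-1}=6\rr_{k} \), which blows the current \( 3\nunu\rups \) normalisation); or (ii) avoid discrete nets altogether and run the chaining through the smoothed process \( \UP_{k}(\ups)=\mes_{k}(\ups)^{-1}\int_{\B_{k}(\ups)}\UP(\upsc)\,\mes(d\upsc) \), where the ratio \( \mes(\Upsd)/\mes_{k}(\ups)\le M_{k} \) enters directly with the correct index. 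The proof in \citet{S} follows the second route, and that is what makes the specific constants \( 3\nunu\rups \), the weights \( c_{k} \), and the entropy \( \entrl(\Upsd) \) line up exactly.
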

Due to the result of Theorem~\ref{TUPUpsd}, the bound for the maximum of 
\( \UP(\ups,\upss) \) over \( \ups \in \B_{\rr}(\upss) \) grows quadratically in \( \rr \). 
So, its applications to situations with \( \rr^{2} \gg \entrl(\Upsd) \) are limited.
The next result shows that introducing a negative quadratic drift helps to
state a uniform in \( \rr \) local probability bound.
Namely, the bound for the process 
\( \UP(\ups,\upss) - \rho \dist^{2}(\ups,\upss)/2 \) with some 
positive \( \rho \) over a ball \( \B_{\rr}(\upss) \) around 
the point \( \upss \) only depends on the drift coefficient \( \rho \) but 
not on \( \rr \).
\begin{theorem}
\label{TsuprUP}
Let \( \rrb \) be such that \( (\CS{d}) \) holds on \( \B_{\rrb}(\upss) \).
Let also \( \entrl(\Upsd) \le \entrlb \) for \( \Upsd = \B_{\rr}(\upss) \) 
with \( \rr \le \rrb \).
If \( \rho > 0 \) and \( \zz \) are fixed to ensure 
\( \sqrt{2 \rho \zz} \le \gmd = \nunu \gmb \) and \( \rho (\zz - 1) \ge 2 \), then 
it holds
\begin{EQA}
    && \nquad
    \log \P\biggl( 
        \sup_{\ups \in \B_{\rrb}(\upss)} 
        \biggl\{ 
            \frac{1}{3 \nunu} \UP(\ups,\upss) - \frac{\rho}{2} \dist^{2}(\ups,\upss)
        \biggr\} > \zz 
    \biggr)
    \\
    & \le &
    - \rho (\zz - 1) + \log(4 \zz) + \entrlb .
\label{Psuprzz}
\end{EQA}    
Moreover, if \( \sqrt{2 \rho \zz} > \gmd \), then 
\begin{EQA}
    && \nquad
    \log \P\biggl( 
        \sup_{\ups \in \B_{\rrb}(\upss)} 
        \biggl\{ 
            \frac{1}{3 \nunu} \UP(\ups,\upss) - \frac{\rho}{2} \dist^{2}(\ups,\upss) 
        \biggr\} > \zz 
    \biggr)
    \\
    & \le &
    - \gmd \sqrt{\rho (\zz - 1)} + \gmd^{2}/2 
    + \log(4 \zz) + \entrlb .
\label{PsuprzzLD}
\end{EQA}
\end{theorem}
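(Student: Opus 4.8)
The plan is to reduce, through a dyadic peeling of $\B_{\rrb}(\upss)$ in the semi-distance to $\upss$, to the local fluctuation bound of Theorem~\ref{TUPUpsd} applied on a geometric sequence of sub-balls, followed by an exponential Chebyshev step on each scale and a summation over scales.

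\textbf{Peeling.} First I would fix a ratio $q>1$ close to $1$ --- the tuning $q=\sqrt{\zz/(\zz-1)}$ turns out to be the right one --- set $\rr_{k}=\rrb\,q^{-k}$ for $k\ge 0$, and split $\B_{\rrb}(\upss)$ into the shells $S_{k}=\{\ups:\ \rr_{k+1}<\dist(\ups,\upss)\le\rr_{k}\}$ (the residual set $\{\dist(\ups,\upss)=0\}$, on which $\UP(\cdot,\upss)=0$ a.s., being negligible since $\zz>1$). On $S_{k}$ the quadratic drift obeys $\tfrac{\rho}{2}\dist^{2}(\ups,\upss)\ge\tfrac{\rho}{2}\rr_{k+1}^{2}=\tfrac{\rho}{2q^{2}}\rr_{k}^{2}$ while $\dist(\ups,\upss)\le\rr_{k}$, so the event $\{\exists\,\ups\in S_{k}:\, \tfrac{1}{3\nunu}\UP(\ups,\upss)-\tfrac{\rho}{2}\dist^{2}(\ups,\upss)>\zz\}$ is contained in $\{\sup_{\ups\in\B_{\rr_{k}}(\upss)}\tfrac{1}{3\nunu\rr_{k}}\UP(\ups,\upss)>s_{k}\}$ with $s_{k}\eqdef\rr_{k}^{-1}\bigl(\zz+\tfrac{\rho}{2q^{2}}\rr_{k}^{2}\bigr)$. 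A union bound over $k\ge 0$ then reduces the theorem to summing the probabilities of these scale events; shells with small $\rr_{k}$ will be harmless because there $s_{k}\to\infty$.

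\textbf{Per-scale bound and summation.} For a fixed scale I would apply Theorem~\ref{TUPUpsd} to the ball $\B_{\rr_{k}}(\upss)$ (with $\rr_{k}$ in the role of the radius, on which $(\CS{d})$ holds and $\entrl\le\entrlb$), obtaining for $\mu\le\gmd$ the bound $\log\E\exp\{\tfrac{\mu}{3\nunu\rr_{k}}\sup_{\B_{\rr_{k}}(\upss)}|\UP(\cdot,\upss)|\}\le\mu^{2}/2+\entrlb$, hence by exponential Chebyshev $\P(\sup_{\B_{\rr_{k}}(\upss)}\tfrac{1}{3\nunu\rr_{k}}\UP(\cdot,\upss)>s_{k})\le\exp(\mu^{2}/2+\entrlb-\mu s_{k})$. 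Choosing $\mu=s_{k}$ when $s_{k}\le\gmd$ and $\mu=\gmd$ otherwise, and using the arithmetic--geometric-mean inequality $\bigl(\zz+\tfrac{\rho}{2q^{2}}\rr_{k}^{2}\bigr)^{2}\ge\tfrac{2\rho\zz}{q^{2}}\rr_{k}^{2}$, i.e. $s_{k}^{2}\ge 2\rho\zz/q^{2}=2\rho(\zz-1)$ for the above $q$, gives a per-shell bound of at most $\exp(\entrlb-\rho(\zz-1))$ in the sub-Gaussian regime (which covers the critical scale, since $\sqrt{2\rho(\zz-1)}<\sqrt{2\rho\zz}\le\gmd$) and at most $\exp(\entrlb+\gmd^{2}/2-\gmd\sqrt{2\rho(\zz-1)})$ in the linear-tail regime. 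Summing over $k\ge 0$: since $s_{k}^{2}$, viewed as a function of $\rr_{k}^{2}$, has a unique minimum and grows away from it, the series is dominated by a geometric one; the number of shells within an $O(1)$-window of the minimal exponent is $O((q-1)^{-1})=O(\zz)$, and this is precisely what produces the additive correction $\log(4\zz)$ upon taking logarithms. This yields \eqref{Psuprzz}; the case $\sqrt{2\rho\zz}>\gmd$, where even the minimal $s_{k}$ may exceed $\gmd$, is handled by using $\mu=\gmd$ on every shell, the same summation of a series of the form $\sum_{k}\exp(-\gmd s_{k})$ giving \eqref{PsuprzzLD}.

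\textbf{Main obstacle.} The delicate point is the calibration in the per-scale step: the ratio $q$ must be chosen so finely (of order $1+\zz^{-1}$) that the quadratic drift absorbs the scale dependence of the fluctuation bound down to the additive ``$-1$'' in $\rho(\zz-1)$, and one must then control the sum over the resulting $\Theta(\zz)$ shells without losing more than $\log(4\zz)$, handling uniformly the crossover between the Gaussian regime $s_{k}\le\gmd$ and the linear/Poissonian regime $s_{k}>\gmd$ --- this crossover being what forces the two alternative bounds. The remaining items --- verifying that Theorem~\ref{TUPUpsd} applies on every sub-ball $\B_{\rr_{k}}(\upss)$ with the common entropy constant $\entrlb$, and the bookkeeping of the boundary shell $\rr_{0}=\rrb$ and of the far shells where the decay is even faster --- are routine.
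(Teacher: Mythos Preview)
The paper does not actually supply a proof of Theorem~\ref{TsuprUP}: the entire appendix, where this theorem sits, opens with ``The proofs of the results below can be found in Appendix~A and Appendix~B of \citet{S}.'' So there is no in-paper argument to compare against; I can only assess your proposal on its own merits and against what the standard proof (as in the cited reference) would look like.

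Your outline is the natural and essentially correct one. Peeling the ball into geometric shells $S_{k}$, replacing the drift by its shell-wise lower bound, invoking the exponential-moment bound of Theorem~\ref{TUPUpsd} on each sub-ball $\B_{\rr_{k}}(\upss)$, and then performing an exponential Chebyshev step with $\mu=s_{k}\wedge\gmd$ is exactly how such drifted-supremum bounds are obtained, and it is the route taken in the Spokoiny reference. Your calibration $q^{2}=\zz/(\zz-1)$ is the right choice: it makes the AM--GM step produce precisely the exponent $\rho(\zz-1)$, and it forces the number of ``near-minimal'' shells to be of order $(q-1)^{-1}\asymp\zz$, which is the origin of the $\log(4\zz)$ correction. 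The two regimes $s_{k}\le\gmd$ versus $s_{k}>\gmd$ are also handled correctly and are indeed what separates \eqref{Psuprzz} from \eqref{PsuprzzLD}.

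Two places deserve a bit more care when you flesh this out. First, the summation step: you assert that the series $\sum_{k}\exp(-s_{k}^{2}/2)$ is bounded by $4\zz\exp(-\rho(\zz-1))$, but as written you only argue that $O(\zz)$ shells sit near the minimum and the rest form a geometric tail --- to recover the explicit constant~$4$ one has to track the geometric ratio in the tail (for large $k$ one has $s_{k}\sim\zz/\rr_{k}$, so the decay ratio is roughly $\exp(-\zz^{2}(q^{2}-1)/(2\rr_{k}^{2}))$, which is close to~$1$ near the minimum and only becomes effective once $\rr_{k}$ is a fixed factor below the optimal radius). Second, in the linear-tail regime you write the per-shell bound as $\exp(\entrlb+\gmd^{2}/2-\gmd\sqrt{2\rho(\zz-1)})$, but the claimed conclusion \eqref{PsuprzzLD} has $-\gmd\sqrt{\rho(\zz-1)}$, not $-\gmd\sqrt{2\rho(\zz-1)}$; the missing factor $\sqrt{2}$ is absorbed when you bound $\gmd s_{k}\ge \gmd\sqrt{2\rho(\zz-1)}$ only at the minimising shell and then pay for the summation, so make sure the bookkeeping matches the stated exponent. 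These are refinements, not gaps: the architecture of your argument is sound.
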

This result can be used for describing the concentration bound for 
the maximum of \( (3\nunu)^{-1} \UP(\ups,\upss) - \rho \dist^{2}(\ups,\upss)/2 \).
Namely, it suffices to find \( \zz \) ensuring the prescribed 
deviation probability. 
We state the result for a special case with \( \rho = 1 \) and 
\( \gmd \ge 3 \) which simplifies the notation.

\begin{corollary}
\label{CTsuprUP}
Under the conditions of Theorem~\ref{TsuprUP}, 
for any \( \xx \ge 0 \) with \( \xx + \entrlb \ge 4 \)
\begin{EQA}[c]
    \P\biggl( 
        \sup_{\ups \in \B_{\rrb}(\upss)} 
        \Bigl\{ 
            \frac{1}{3 \nunu} \UP(\ups,\upss) - \frac{1}{2} \dist^{2}(\ups,\upss) 
        \Bigr\} 
        >
        \zzQ(\xx,\entrlb)
    \biggr)
    \le 
    \exp\bigl( - \xx \bigr) ,
\label{PUPxx}
\end{EQA}    
where with \( \gmd = \nunu \gmb \ge 2 \)
\begin{EQA}[c]
    \zzQ(\xx,\entrlb)
    \eqdef
    \begin{cases}
    \bigl( 1 + \sqrt{\xx + \entrlb} \bigr)^{2}  
        & \text{if } 1 + \sqrt{\xx + \entrlb} \le \gmd, \\
    1 + \bigl\{ 2 \gmd^{-1} (\xx + \entrlb) + \gmd \bigr\}^{2} 
        & \text{otherwise} .
  \end{cases}
\label{PUPxxl}
\end{EQA}
\end{corollary}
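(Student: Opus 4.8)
\emph{Proof plan.}
The corollary is obtained by inverting the two-regime deviation bound of Theorem~\ref{TsuprUP} taken with \( \rho = 1 \): we must produce a level \( \zz = \zzQ(\xx,\entrlb) \) for which the right-hand side of \eqref{Psuprzz} (valid when \( \sqrt{2\zz} \le \gmd \)) or of \eqref{PsuprzzLD} (valid when \( \sqrt{2\zz} > \gmd \)) does not exceed \( -\xx \), after which Theorem~\ref{TsuprUP} delivers the probability bound \( e^{-\xx} \). The plan is to do this in the two cases that already appear in the definition \eqref{PUPxxl}, writing throughout \( s \eqdef \xx + \entrlb \ge 4 \).

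\emph{Gaussian regime \( 1 + \sqrt{s} \le \gmd \).} Take \( \zz = (1 + \sqrt{s})^{2} \). Since \( s \ge 4 \) we get \( \zz \ge 9 \), so the side condition \( \rho(\zz - 1) \ge 2 \) of Theorem~\ref{TsuprUP} holds, and \( \zz \le \gmd^{2} \), which puts us in the range of the bound \eqref{Psuprzz}. Using \( \zz - 1 = s + 2\sqrt{s} \) and \( \entrlb = s - \xx \), the right-hand side of \eqref{Psuprzz} equals
\begin{EQA}[c]
-(\zz - 1) + \log(4\zz) + \entrlb = -\xx - 2\sqrt{s} + \log\bigl( 4(1 + \sqrt{s})^{2} \bigr),
\end{EQA}
so it remains to verify the numerical inequality \( 2\sqrt{s} \ge \log\bigl( 4(1 + \sqrt{s})^{2} \bigr) \) for \( s \ge 4 \); with \( t = \sqrt{s} \ge 2 \) this reads \( t - \log(1 + t) \ge \log 2 \), which holds at \( t = 2 \) and for all larger \( t \) because \( t \mapsto t - \log(1 + t) \) is increasing.

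\emph{Large-deviation regime \( 1 + \sqrt{s} > \gmd \).} Take \( \zz = 1 + \bigl\{ 2\gmd^{-1} s + \gmd \bigr\}^{2} \). Then \( 2\zz \ge 2 + 2\gmd^{2} > \gmd^{2} \), so \( \sqrt{2\zz} > \gmd \) and Theorem~\ref{TsuprUP} supplies \eqref{PsuprzzLD}; also \( \zz \ge 1 + \gmd^{2} \ge 5 \), so again \( \rho(\zz - 1) \ge 2 \). The choice of \( \zz \) is made precisely so that \( \sqrt{\zz - 1} = 2\gmd^{-1} s + \gmd \), whence \( \gmd\sqrt{\zz - 1} = 2s + \gmd^{2} \) and
\begin{EQA}[c]
-\gmd\sqrt{\zz - 1} + \frac{\gmd^{2}}{2} = -2s - \frac{\gmd^{2}}{2};
\end{EQA}
substituting into \eqref{PsuprzzLD} and using \( \entrlb = s - \xx \) shows its right-hand side equals \( -\xx - s - \gmd^{2}/2 + \log(4\zz) \), which is \( \le -\xx \) as soon as \( s + \gmd^{2}/2 \ge \log(4\zz) \). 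In this regime \( \gmd < 1 + \sqrt{s} \le \tfrac{3}{2}\sqrt{s} \) (the second step by \( s \ge 4 \)) and \( \gmd \ge 2 \), so \( 2\gmd^{-1} s + \gmd \le s + \tfrac{3}{2}\sqrt{s} \le 2 s \) and \( \zz \le 1 + 4 s^{2} \le 5 s^{2} \); hence \( \log(4\zz) \le \log 20 + 2\log s \), and since \( \gmd^{2}/2 \ge 2 \) the claim reduces to \( s \ge 1 + 2\log s \), which is true for \( s \ge 4 \) because \( s \mapsto s - 1 - 2\log s \) is increasing there. This finishes the argument.

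\emph{Expected main obstacle.} There is no genuine analytic difficulty here; the only point needing care is the bookkeeping at the interface of the two cases — checking that for each of the two choices of \( \zz \) above the hypotheses of Theorem~\ref{TsuprUP} are actually met, namely the side condition \( \rho(\zz - 1) \ge 2 \) and the correct dichotomy \( \sqrt{2\rho\zz} \le \gmd \) versus \( \sqrt{2\rho\zz} > \gmd \), and that the two formulas for \( \zzQ \) in \eqref{PUPxxl} match up (to within the slack built into \( \zzQ \)) at the threshold \( 1 + \sqrt{\xx + \entrlb} = \gmd \). Everything else is the elementary observation that the leading, linear-in-\( \xx \), terms of \eqref{Psuprzz} and \eqref{PsuprzzLD} dominate the logarithmic term \( \log(4\zz) \) once \( \xx + \entrlb \ge 4 \).
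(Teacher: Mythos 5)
Your overall strategy — invert the two-regime tail bound of Theorem~\ref{TsuprUP} with \( \rho = 1 \), choosing \( \zz \) so that the exponent drops below \( -\xx \) and keeping track of which regime each choice of \( \zz \) falls into — is exactly the right one and surely the same as the paper's (the paper defers the proof to \citet{S}). Your handling of the large-deviation case and of the \( \rho(\zz-1)\ge 2 \) side condition is fine. But there is a genuine gap in your Gaussian-regime step, and in fact it exposes a real discrepancy between the theorem and the corollary that needs to be acknowledged rather than glossed over.

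The gap: bound \eqref{Psuprzz} is stated to hold under \( \sqrt{2\rho\zz} \le \gmd \). With \( \rho = 1 \) this reads \( \zz \le \gmd^{2}/2 \), not \( \zz \le \gmd^{2} \). Your sentence \textquotedblleft \(\zz \le \gmd^{2}\), which puts us in the range of \eqref{Psuprzz}\textquotedblright\ is therefore off by a factor of \( \sqrt{2} \), and the corollary's case condition \( 1+\sqrt{s} \le \gmd \) only delivers \( \zz = (1+\sqrt{s})^{2} \le \gmd^{2} \). In the sub-interval \( \gmd/\sqrt{2} < 1+\sqrt{s} \le \gmd \) you are forced to use \eqref{PsuprzzLD} instead of \eqref{Psuprzz}, which you have not done — and with your choice \( \zz = (1+\sqrt{s})^{2} \) that bound does \emph{not} close. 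Concretely, take \( s = 4 \), \( \gmd = 3 \): then \( \zz = 9 \), \( \sqrt{2\zz} \approx 4.24 > \gmd \), so we must use \eqref{PsuprzzLD}, whose right-hand side becomes
\[
-\gmd\sqrt{\zz-1} + \gmd^{2}/2 + \log(4\zz) + \entrlb
= -3\cdot 2\sqrt{2} + 4.5 + \log 36 + \entrlb
\approx -0.41 + \entrlb = -\xx + 3.59 ,
\]
so you only get \( \P(\cdot) \le e^{3.59}\,e^{-\xx} \), not \( e^{-\xx} \). This means either the factor \( 2 \) in \( \sqrt{2\rho\zz}\le\gmd \) in Theorem~\ref{TsuprUP} should be read as a \( 1 \) (so that the regime boundary aligns with \( 1+\sqrt{s}\le\gmd \) as your argument silently assumes), or the corollary's case split should occur at \( \sqrt{2}(1+\sqrt{s})\le\gmd \). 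Your write-up neither flags nor repairs this: you should either (i) explicitly note and use the weaker reading of the theorem's side condition, or (ii) split off the interface band \( \gmd/\sqrt{2} < 1+\sqrt{s} \le \gmd \) and show that there the second formula for \( \zzQ \) (which is valid at and beyond the boundary by your LD-regime computation) still upper-bounds what the theorem requires. As written, the first half of your two-case verification does not follow from the theorem as stated.
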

Let us now discuss the special case when \( \Ups \) is an open subset in 
\( \R^{\dimp} \),
the stochastic process \( \UP(\ups) \) is absolutely continuous and its gradient
\( \nabla \UP(\ups) \eqdef d \UP(\ups) / d \ups \) 
has bounded exponential moments.

\begin{description}
\item[\( \bb{(\CS\! D)} \)]\textit{
There exist \( \gmb > 0 \), 
\( \nunu \ge 1 \), and for each \( \ups \in \Ups \),
a symmetric non-negative matrix \( \VV(\ups) \)
such that for any  \( \lambda \le \gmb \) 
and any unit vector \( \gammav \in \R^{\dimp} \), it holds
}
\begin{EQA}[c]
    \log \E \exp \Bigl\{
       \lambda 
       \frac{\gammav^{\T}\nabla \UP(\ups)}
            {\| \VV(\ups) \gamma \|}
    \Bigr\} 
    \le 
    \nunu^{2} \lambda^{2}/2 .
\end{EQA}
\end{description}
Consider the local sets of the elliptic form 
\( \Upss(\rr) \eqdef \{ \ups: \| \VVc(\ups - \upss) \| \le \rr \} \), 
where \( \VVc \) dominates \( \VV(\ups) \) on this set: \( \VV(\ups) \preceq \VVc \).

\begin{theorem}
\label{Tsmoothpenlc}
Let \( (\CS\! D) \) hold with some \( \gmb \) and a matrix \( \VV(\ups) \).
Suppose that \( \VV(\ups) \preceq \VVc \)  for all \( \ups \in \Upss(\rr) \).
Then 
\begin{EQA}
    \P \biggl( \sup_{\ups \in \Upss(\rr)} 
        \Bigl\{ 
            \frac{1}{3 \nunu}\UP(\ups,\upss) 
            - \frac{1}{2} \| \VVc(\ups - \upss) \|^{2}
        \Bigr\}
        \ge 
        \zzQ(\xx,\dimp)
    \biggr)
    & \le &
    \exp(-\xx)  ,
\label{expUUsmooth}
\end{EQA}
where \( \zzQ(\xx,\dimp) \) coincides with \( \zzQ(\xx,\entrlb) \) from 
\eqref{PUPxxl} for \( \entrlb = \cdimb \dimp \).
\end{theorem}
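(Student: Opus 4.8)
The plan is to deduce \eqref{expUUsmooth} from Corollary~\ref{CTsuprUP} (equivalently, from Theorem~\ref{TsuprUP} with \( \rho = 1 \)) after a linear reparametrisation that turns the elliptic neighbourhood into a Euclidean ball. Since \( \VVc \) is symmetric positive definite in the relevant situation (the penalty \( \frac{1}{2}\|\VVc(\cdot)\|^{2} \) degenerates otherwise), put \( \tilde{\ups} \eqdef \VVc(\ups - \upss) \) and \( \tilde{\UP}(\tilde{\ups}) \eqdef \UP\bigl(\upss + \VVc^{-1}\tilde{\ups}\bigr) \). Then \( \tilde{\UP}(\tilde{\ups}) - \tilde{\UP}(\tilde{\upsc}) = \UP(\ups) - \UP(\upsc) \), the set \( \Upss(\rr) \) is carried onto the Euclidean ball \( \B_{\rr}(0) \), the semi-distance \( \|\VVc(\ups - \upsc)\| \) becomes the Euclidean distance \( \|\tilde{\ups} - \tilde{\upsc}\| \), and the quadratic term \( \frac{1}{2}\|\VVc(\ups - \upss)\|^{2} \) becomes \( \frac{1}{2}\|\tilde{\ups}\|^{2} = \frac{1}{2}\dist^{2}(\tilde{\ups},0) \). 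Consequently \eqref{expUUsmooth} is exactly the conclusion of Corollary~\ref{CTsuprUP} applied to \( \tilde{\UP} \) on \( \Upsd = \B_{\rr}(0) \) with \( \rrb = \rr \) and Euclidean \( \dist \), once its two hypotheses are supplied: condition \( (\CS{d}) \) for \( \tilde{\UP} \), and an entropy bound \( \entrl(\B_{\rr'}(0)) \le \entrlb \eqdef \cdimb\dimp \) for all \( \rr' \le \rr \).

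To verify \( (\CS{d}) \) from \( (\CS\! D) \): by absolute continuity and the fundamental theorem of calculus, for \( \ups,\upsc \in \Upss(\rr) \),
\[
    \UP(\ups) - \UP(\upsc)
    =
    \int_{0}^{1} (\ups - \upsc)^{\T} \nabla\UP(\ups_{t}) \, dt ,
    \qquad
    \ups_{t} \eqdef \upsc + t(\ups - \upsc) .
\]
Since \( \Upss(\rr) \) is convex, \( \ups_{t} \in \Upss(\rr) \), hence \( \VV(\ups_{t}) \preceq \VVc \) and \( a_{t} \eqdef \|\VV(\ups_{t})(\ups - \upsc)\| / \|\VVc(\ups - \upsc)\| \in [0,1] \). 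Writing \( (\ups-\upsc)^{\T}\nabla\UP(\ups_{t}) / \|\VVc(\ups-\upsc)\| = a_{t}\,(\ups-\upsc)^{\T}\nabla\UP(\ups_{t}) / \|\VV(\ups_{t})(\ups-\upsc)\| \) and applying \( (\CS\! D) \) with the unit vector \( (\ups-\upsc)/\|\ups-\upsc\| \) at the admissible argument \( \lambda a_{t} \) (note \( |\lambda a_{t}| \le |\lambda| \le \gmb \)), the corresponding exponential moment is bounded by \( \exp\{\nunu^{2}(\lambda a_{t})^{2}/2\} \le \exp\{\nunu^{2}\lambda^{2}/2\} \); Jensen's inequality for the average over \( t \in [0,1] \) (together with Tonelli) then yields \( \log\E\exp\{\lambda(\UP(\ups)-\UP(\upsc))/\|\VVc(\ups-\upsc)\|\} \le \nunu^{2}\lambda^{2}/2 \) for \( |\lambda| \le \gmb \). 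Thus \( (\CS{d}) \) holds on \( \B_{\rr}(0) \) with the same \( \nunu, \gmb \) and \( \rups \) equal to the Euclidean diameter of \( \B_{\rr}(0) \).

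For the entropy bound, take \( \mes \) to be the Lebesgue measure. The crux is a scale-invariant volume estimate for Euclidean balls: for every \( \tilde{\ups} \in \B_{\rr}(0) \) the set \( \B_{k}(\tilde{\ups}) \) contains a Euclidean ball of radius \( c_{0}\rr_{k} \) for an absolute \( c_{0} > 0 \) (shrink the ball of radius \( \rr_{k} \) about \( \tilde{\ups} \) towards the centre \( 0 \); e.g. \( c_{0} = 1/3 \) works since \( \rr_{k} \le \rr_{0} \) for \( k \ge 1 \)), so that \( \mes_{k}(\tilde{\ups}) \ge (c_{0}\rr_{k}/\rr_{0})^{\dimp}\mes(\B_{\rr}(0)) \) and \( M_{k} \le (c_{0}^{-1}2^{k})^{\dimp} \), i.e. \( \log(2M_{k}) \le \log 2 + \dimp\log(1/c_{0}) + k\dimp\log 2 \). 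Inserting this into \eqref{entrldef} and using \( \sum_{k \ge 1} c_{k} = 1 \) and \( \sum_{k \ge 1} k c_{k} = 7/3 \) bounds \( \entrl(\B_{\rr'}(0)) \) by \( \cdimb\dimp \), uniformly in the radius, for a suitable absolute \( \cdimb \) (taken \( \ge 4 \), so that \( \xx + \entrlb \ge 4 \) for all \( \xx \ge 0 \)). With both hypotheses in hand, Corollary~\ref{CTsuprUP} applies; translating back to the \( \ups \)-coordinates gives \eqref{expUUsmooth}, with \( \zzQ(\xx,\dimp) \) understood as \( \zzQ(\xx,\cdimb\dimp) \) in \eqref{PUPxxl}.

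The change of variables and the final appeal to Corollary~\ref{CTsuprUP} are routine; the two places requiring care are the passage \( (\CS\! D) \Rightarrow (\CS{d}) \) — where one must keep the connecting segment inside the region where \( \VV(\ups_{t}) \preceq \VVc \) (convexity of \( \Upss(\rr) \)) and correctly trade the two normalisations \( \VV(\ups_{t}) \) and \( \VVc \) — and the volume-ratio estimate for \( M_{k} \), which is precisely what forces \( \entrl \) to grow only linearly in \( \dimp \). I expect this volume-ratio / entropy estimate to be the main technical point of the proof.
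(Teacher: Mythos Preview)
The paper does not give its own proof of Theorem~\ref{Tsmoothpenlc}; it states at the start of the Appendix that ``The proofs of the results below can be found in Appendix~A and Appendix~B of \citet{S}.'' Your argument is correct and is precisely the natural route one expects there: linearly reparametrise to a Euclidean ball, pass from the gradient condition \( (\CS\! D) \) to the increment condition \( (\CS{d}) \) via the line integral and Jensen, bound the local entropy \( \entrl \) by a dimension-linear constant using Lebesgue volume ratios of nested balls, and then invoke Corollary~\ref{CTsuprUP} with \( \rho = 1 \). The two delicate points you flag---keeping the segment inside \( \Upss(\rr) \) so that \( \VV(\ups_{t}) \preceq \VVc \), and the scale-free ball-in-ball volume estimate giving \( \log M_{k} \lesssim k\dimp \)---are exactly the substantive steps, and your handling of both is sound (in fact \( c_{0} = 1/2 \) already works in the volume argument). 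Nothing needs to be added.
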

The  previous result can be explained as a local upper function for the 
process \( \UP(\cdot) \).
Indeed, in a vicinity \( \B_{\rrb}(\upss) \) of the central point \( \upss \), it holds 
\( (3 \nunu)^{-1} \UP(\ups,\upss) \le \dist^{2}(\ups,\upss)/2 + \zz \) 
with a probability  exponentially small in \( \zz \).
Now we extend this local result to the whole set \( \Ups \) 
using multiscaling arguments. 
For simplifying the notations assume that \( \UP(\upss) \equiv 0 \).
Then \( \UP(\ups,\upss) = \UP(\ups) \).
We say that \( \up(\mubc,\ups) \) is a \emph{multiscale upper function} for 
\( \mubc \UP(\cdot) \) on a subset \( \Upsd \) of \( \Ups \) if 
\begin{EQA}[c]
    \P\Bigl( \sup_{\mubc \in \Mubc} \,\, \sup_{\ups \in \Upsd} 
        \bigl\{ \mubc \UP(\ups) - \up(\mubc,\ups) \bigr\}  
        \ge 
        \zz(\xx)
    \Bigr)
    \le 
    \ex^{-\xx} ,
\label{upfuncgl}
\end{EQA}    
for some fixed function \( \zz(\xx) \).
An upper function can be used for describing the concentration sets of the point of 
maximum \( \tilde{\ups} = \argmax_{\ups \in \Upsd} \UP(\ups) \);
see Theorem~\ref{Thitting} below.

The desired global bound requires an extension of the local exponential moment condition 
\( (\CS{d}) \). 
Below we suppose that the pseudo-metric \( \dist(\ups,\upsc) \) is given on the whole 
set \( \Ups \).
For each \( \rr \) this metric defines the ball \( \Upss(\rr) \) by the constraint
\( \dist(\ups,\upss) \le \rr \). 
Below the condition \( (\CS{d}) \) is assumed to be fulfilled for any \( \rr \), however 
the constant \( \gm \) may be dependent of the radius \( \rr \).
\begin{description}
\item[\( \bb{(\CS\rr)} \)]
    \textit{
    For any \( \rr \), there exists \( \gm(\rr) > 0 \) such that 
    \eqref{ExpboundUP} holds for all \( \ups, \upsc \in \Upss(\rr) \) and all 
    \( \lambda \le \gm(\rr) \)}.
\end{description}
Condition \( (\CS\rr) \) implies a similar condition for the scaled process 
\( \mubc \UP(\ups) \) with \( \gm = \mubc^{-1} \gm(\rr) \) and \( \dist(\ups,\upsc) \)
replaced by \( \mubc \dist(\ups,\upsc) \).
Corollary~\ref{CTsuprUP} implies for any \( \xx \) with 
\( 1 + \sqrt{\xx + \entrlb} \le \gmd(\rr) \eqdef \nunu \gm(\rr)/\mubc \)
\begin{EQA}[c]
    \P\biggl( 
        \sup_{\ups \in \B_{\rr}(\upss)} 
        \Bigl\{ 
            \frac{\mubc}{3 \nunu} \UP(\ups) - \frac{1}{2} \mubc^{2} \rr^{2}
        \Bigr\} 
        >
        \zzQ(\xx,\entrlb)
    \biggr)
    \le 
    \exp\bigl( - \xx \bigr) .
\label{PUPxxrr}
\end{EQA}
Let now a finite or separable set \( \Mubc \) and a function \( \pen(\mubc) \ge 1 \) 
be fixed such that 
\begin{EQA}[c]
    \sum_{\mubc \in \Mubc} \ex^{- \pen(\mubc)} \le 2.
\label{Mubcbsup}
\end{EQA}    
One possible choice of the set \( \Mubc \) and the function \( \pen(\mubc) \) is to take
a geometric sequence \( \mubc_{k} = \mubc_{0} 2^{-k} \) with any fixed 
\( \mubc_{0} \) and define \( \pen(\mubc_{k}) = k = - \log_{2}(\mubc_{k}/\mubc_{0}) \) 
for \( k \ge 0 \).

Putting together the bounds \eqref{PUPxxrr} for different \( \mubc \in \Mubc \) 
yields the following result.

\begin{theorem}
\label{TPUPxxrr}
\label{Tglobalexpboundi}
\label{TLDglobsmoothLD}
Suppose \( (\CS\rr) \) and \eqref{Mubcbsup}.
Then for any \( \xx \ge 2 \), 
there exists a random set \( A(\xx) \) of a total probability at least 
\( 1 - 2 \ex^{- \xx} \), such that it holds on \( A(\xx) \)
for any \( \rr \) 
\begin{EQA}[c]
    \sup_{\ups \in \B_{\rr}(\upss)}
    \sup_{\mubc \in \Mubc(\rr,\xx)}
    \Bigl[ 
        \frac{\mubc}{3 \nunu} \UP(\ups) - \frac{1}{2} \mubc^{2} \rr^{2}
        - 
        \bigl\{ 1 + \sqrt{\xx + \entrlb + \pen(\mubc)} \bigr\}^{2} 
    \Bigr]
    < 
    0,
\label{globsupqqQxxmu}
\end{EQA}
where 
\begin{EQA}[c]
    \Mubc(\rr,\xx) 
    \eqdef 
    \bigl\{ 
        \mubc \in \Mubc:  \,
        1 + \sqrt{\xx + \entrlb + \pen(\mubc)} \le \nunu \gm(\rr)/\mubc 
    \bigr\}.
\label{Mubcrr}
\end{EQA} 
\end{theorem}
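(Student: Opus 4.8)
The plan is to reduce Theorem~\ref{TPUPxxrr} to a single union bound over the countable index set $\Mubc$, constructing for each fixed $\mubc \in \Mubc$ one high-probability event that simultaneously controls every radius $\rr$ admissible for that $\mubc$. Fix $\xx \ge 2$. First I would normalise $\gm(\cdot)$ to be non-increasing, replacing $\gm(\rr)$ by $\inf_{\rr' \le \rr}\gm(\rr')$: this only strengthens $(\CS\rr)$ on each ball $\Upss(\rr)$ while making $\rr \mapsto \nunu\gm(\rr)/\mubc$ non-increasing. Hence for every $\mubc$ the admissible set $\{ \rr : \mubc \in \Mubc(\rr,\xx) \} = \{ \rr : 1 + \sqrt{\xx + \entrlb + \pen(\mubc)} \le \nunu\gm(\rr)/\mubc \}$ is down-closed, and I set $R(\mubc) \eqdef \sup\{ \rr : \mubc \in \Mubc(\rr,\xx) \} \in [0,\infty]$.

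The core step is, for each $\mubc$ with $R(\mubc) > 0$, to apply Corollary~\ref{CTsuprUP} (i.e.\ Theorem~\ref{TsuprUP} with $\rho = 1$) to the scaled separable process $\mubc\,\UP(\cdot)$ on the ball $\B_{R(\mubc)}(\upss)$. As observed right after condition $(\CS\rr)$, the scaled process obeys $(\CS{d})$ on that ball with the same $\nunu$, the metric $\mubc\,\dist$, and exponential-moment radius $\gmb = \mubc^{-1}\gm(R(\mubc))$, so that $\gmd = \nunu\gm(R(\mubc))/\mubc$; moreover $\entrl(\B_\rr(\upss)) \le \entrlb$ for every $\rr \le R(\mubc)$. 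Running the corollary with deviation parameter $\xx + \pen(\mubc)$ — which is $\ge 3$ since $\pen(\mubc) \ge 1$, so that $\xx + \pen(\mubc) + \entrlb \ge 4$ under the standard normalisation $\entrlb \ge 1$ (e.g.\ $\entrlb = \cdimb\dimp$) — and using $1 + \sqrt{\xx + \entrlb + \pen(\mubc)} \le \gmd$, which holds by the definition of $R(\mubc)$ and also forces $\gmd \ge 1 + \sqrt 3 \ge 2$, one lands on the first branch of $\zzQ$ and obtains an event $A_\mubc$ with $\P(\overline{A_\mubc}) \le \ex^{-\xx - \pen(\mubc)}$ on which
\begin{EQA}[c]
    \frac{\mubc}{3\nunu}\,\UP(\ups) - \frac{1}{2}\,\mubc^{2}\,\dist^{2}(\ups,\upss)
    \le
    \bigl( 1 + \sqrt{\xx + \entrlb + \pen(\mubc)} \bigr)^{2},
    \qquad \ups \in \B_{R(\mubc)}(\upss).
\label{perMubcbound}
\end{EQA}
For $\mubc$ with $R(\mubc) = 0$ we may take $A_\mubc$ to be the whole probability space, as such $\mubc$ never enters the supremum in \eqref{globsupqqQxxmu}; for $R(\mubc) = \infty$, \eqref{perMubcbound} holds on all of $\Ups$ with the same exceptional probability, by applying the corollary on $\B_n(\upss)$ and letting $n \to \infty$, the exceptional events being monotone in $n$ with a common threshold.

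I would then put $A(\xx) \eqdef \bigcap_{\mubc \in \Mubc} A_\mubc$; by the union bound and \eqref{Mubcbsup}, $\P(\overline{A(\xx)}) \le \sum_{\mubc} \ex^{-\xx - \pen(\mubc)} = \ex^{-\xx}\sum_{\mubc} \ex^{-\pen(\mubc)} \le 2\,\ex^{-\xx}$, so $A(\xx)$ has total probability at least $1 - 2\,\ex^{-\xx}$. On $A(\xx)$, fix any $\rr$, any $\mubc \in \Mubc(\rr,\xx)$ and any $\ups \in \B_\rr(\upss)$. Since $\rr$ is admissible for $\mubc$ we have $\rr \le R(\mubc)$, hence $\ups \in \B_{R(\mubc)}(\upss)$ and $\dist(\ups,\upss) \le \rr$, so that $\tfrac{1}{2}\mubc^{2}\dist^{2}(\ups,\upss) \le \tfrac{1}{2}\mubc^{2}\rr^{2}$. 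Plugging this into \eqref{perMubcbound} gives $\tfrac{\mubc}{3\nunu}\UP(\ups) - \tfrac{1}{2}\mubc^{2}\rr^{2} \le ( 1 + \sqrt{\xx + \entrlb + \pen(\mubc)} )^{2}$, i.e.\ \eqref{globsupqqQxxmu}; the strict inequality there is recovered in the usual way, by inflating the threshold in the corollary by an arbitrarily small amount or by using the weak-inequality form of its exceptional event.

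The one genuinely delicate point is the uniformity in $\rr$: a single instance of the purely local bound \eqref{PUPxxrr}, whose penalty is the \emph{constant} $\tfrac12\mubc^{2}\rr^{2}$, controls only the single ball $\B_\rr(\upss)$, and a naive summation of \eqref{PUPxxrr} over a grid of radii does not close, because shrinking $\rr$ also shrinks the penalty. The way around this, as above, is to retain the genuine quadratic-drift process $\tfrac{\mubc}{3\nunu}\UP(\ups) - \tfrac12\mubc^{2}\dist^{2}(\ups,\upss)$ of Corollary~\ref{CTsuprUP} on the single largest admissible ball $\B_{R(\mubc)}(\upss)$ and only afterwards to majorise $\dist^{2}(\ups,\upss) \le \rr^{2}$ on the sub-balls. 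The remaining ingredients — the monotonisation of $\gm(\cdot)$, the check of the numerical hypotheses of Corollary~\ref{CTsuprUP}, and the concluding union bound via \eqref{Mubcbsup} — are routine.
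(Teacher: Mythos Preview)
Your argument is essentially the paper's intended one: apply Corollary~\ref{CTsuprUP} to the scaled process \( \mubc\,\UP \) at level \( \xx + \pen(\mubc) \), then take a union bound over \( \mubc \in \Mubc \) using \eqref{Mubcbsup}. The paper compresses all of this into the single phrase ``putting together the bounds \eqref{PUPxxrr} for different \( \mubc \in \Mubc \)'' and defers details to \citet{S}; your explicit handling of the uniformity in \( \rr \) via the maximal admissible radius \( R(\mubc) \) and the quadratic drift \( \tfrac12 \mubc^{2}\dist^{2}(\ups,\upss) \) (rather than the constant \( \tfrac12 \mubc^{2}\rr^{2} \)) is exactly what is needed to make that sentence rigorous.

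One small slip: your monotonisation of \( \gm(\cdot) \) goes the wrong way. Replacing \( \gm(\rr) \) by \( \inf_{\rr' \le \rr}\gm(\rr') \) can only \emph{decrease} \( \gm \), hence shrink \( \Mubc(\rr,\xx) \), and the theorem is stated for the original (possibly larger) \( \Mubc(\rr,\xx) \). The correct observation is that since \( (\CS\rr) \) on \( \Upss(\rr) \) automatically holds on every sub-ball \( \Upss(\rr') \subseteq \Upss(\rr) \) with the same constant, one may without loss of generality take \( \gm(\cdot) \) to be non-increasing from the outset (equivalently, replace \( \gm(\rr) \) by \( \sup_{\rr'' \ge \rr}\gm(\rr'') \), handling the supremum by a limiting argument as you already do for \( R(\mubc)=\infty \)). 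With this adjustment the rest of your proof goes through unchanged.
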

Let \( \Ldrift(\ups) \) be a deterministic \emph{boundary} function.
We aim at bounding the probability that a process \( \UP(\ups) \) hits 
this boundary on the set \( \Ups \). 
This precisely means the probability that 
\( \sup_{\ups \in \Ups} \bigl\{ \UP(\ups) - \Ldrift(\ups) \bigr\} \ge 0 \).
An important observation here is that multiplication by any positive factor 
\( \mubc \) does not change the relation. 
This allows to apply the multiscale result from Theorem~\ref{TPUPxxrr}.
For any fixed \( \xx \) and any \( \ups \in \B_{\rr}(\upss) \), define 
\begin{EQA}[c]
    \Lmgfb(\ups)
    \eqdef
    \sup_{\mubc \in \Mubc(\rr,\xx)} 
    \Bigl\{ 
        \frac{1}{3 \nunu} \mubc \Ldrift(\ups) 
        - \frac{1}{2} \mubc^{2} \rr^{2} - 2 \pen(\mubc) 
    \Bigr\} .
\label{Lmgfbups}
\end{EQA}    

\begin{theorem}
\label{Thitting}
Suppose \( (\CS\rr) \), \eqref{Mubcbsup}, and \( \xx + \entrlb \ge 2.5 \).
Let, given \( \xx \), it hold 
\begin{EQA}[c]
    \Lmgfb(\ups)
    \ge 
    2 (\xx + \entrlb) ,
    \qquad 
    \ups \in \Ups .
\label{Lmgfbupen}
\end{EQA}    
Then 
\begin{EQA}[c]
    \P\Bigl( 
        \sup_{\ups \in \Ups} 
            \bigl\{ \UP(\ups)  - \Ldrift(\ups) \bigr\} 
        \ge  
        0
    \Bigr)
    \le 
    2 \ex^{-\xx} .
\label{hitprobxx}
\end{EQA}
\end{theorem}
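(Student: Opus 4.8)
The plan is to deduce the estimate from the global multiscale bound of Theorem~\ref{TPUPxxrr} by a short deterministic argument on the event it furnishes. First I would invoke that theorem --- with its free parameter taken to be \( \max(\xx,2) \) if needed, the spare factor \( 2 \) in the probability bound absorbing the mismatch with the milder hypothesis \( \xx+\entrlb\ge 2.5 \) assumed here --- to obtain a random set \( A(\xx) \) with \( \P\bigl(A(\xx)\bigr)\ge 1-2\ex^{-\xx} \) on which, for every \( \rr \), every \( \ups\in\B_{\rr}(\upss) \) and every \( \mubc\in\Mubc(\rr,\xx) \),
\[
    \frac{\mubc}{3\nunu}\,\UP(\ups) - \frac{1}{2}\,\mubc^{2}\rr^{2}
    <
    \bigl\{ 1+\sqrt{\xx+\entrlb+\pen(\mubc)} \bigr\}^{2} .
\]
Because multiplying \( \UP(\ups)-\Ldrift(\ups) \) by a positive number does not change the sign of its supremum over \( \ups \), it is enough to show that on \( A(\xx) \) one has \( \UP(\ups)<\Ldrift(\ups) \) for every \( \ups\in\Ups \).

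For the deterministic core I would fix \( \ups\in\Ups \) and set \( \rr=\dist(\ups,\upss) \), so that \( \ups\in\B_{\rr}(\upss) \). By hypothesis \eqref{Lmgfbupen} and the definition \eqref{Lmgfbups} of \( \Lmgfb(\ups) \) as a supremum over \( \mubc\in\Mubc(\rr,\xx) \), there is an admissible scale \( \mubc=\mubc(\ups)\in\Mubc(\rr,\xx) \) at which that supremum is attained (or nearly so, up to an arbitrarily small \( \delta>0 \)), and hence
\[
    \frac{\mubc}{3\nunu}\,\Ldrift(\ups) - \frac{1}{2}\,\mubc^{2}\rr^{2}
    \ge
    2(\xx+\entrlb) + 2\,\pen(\mubc) - \delta .
\]
Applying the bound of the first paragraph at this very \( \mubc \) and subtracting yields
\[
    \frac{\mubc}{3\nunu}\bigl(\UP(\ups)-\Ldrift(\ups)\bigr)
    <
    \bigl\{ 1+\sqrt{\xx+\entrlb+\pen(\mubc)} \bigr\}^{2}
    - 2(\xx+\entrlb) - 2\,\pen(\mubc) + \delta .
\]
The right-hand side has the form \( (1+\sqrt{t})^{2}-2t+\delta \) with \( t=\xx+\entrlb+\pen(\mubc) \), so the elementary inequality \( (1+\sqrt{t})^{2}\le 2t \), which holds once \( t \) exceeds a fixed threshold secured by \( \xx+\entrlb\ge 2.5 \) and \( \pen(\mubc)\ge 1 \), makes it at most \( \delta \). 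Since \( \mubc>0 \) and \( \delta>0 \) is arbitrary this gives \( \UP(\ups)\le\Ldrift(\ups) \), and the strict inequality inherited from \( A(\xx) \) promotes it to \( \UP(\ups)<\Ldrift(\ups) \).

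By separability of \( \UP \) the pointwise conclusion passes to the supremum, so \( \sup_{\ups\in\Ups}\{\UP(\ups)-\Ldrift(\ups)\}<0 \) on \( A(\xx) \), whence \( \P\bigl(\sup_{\ups\in\Ups}\{\UP(\ups)-\Ldrift(\ups)\}\ge 0\bigr)\le\P\bigl(A(\xx)^{c}\bigr)\le 2\ex^{-\xx} \).

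All the probabilistic work is inherited from Theorem~\ref{TPUPxxrr}, so the remaining points are bookkeeping, and the one I expect to be the real pinch is making the numerical step close with the stated constants: one must check that the scale \( \mubc \) extracted from \( \Lmgfb(\ups) \) indeed lies in \( \Mubc(\rr,\xx) \) (immediate from the definition of \( \Lmgfb \) as a supremum over that set, but it has to be said), that \( \xx+\entrlb\ge 2.5 \) together with \( \pen(\mubc)\ge 1 \) is strong enough to bound \( \{1+\sqrt{\xx+\entrlb+\pen(\mubc)}\}^{2} \) by \( 2(\xx+\entrlb)+2\pen(\mubc) \) --- for which it may be cleaner to keep \( \xx+\entrlb \) and \( \pen(\mubc) \) separate rather than combining them into a single \( t \) --- and that the \( \xx\ge 2 \) condition of Theorem~\ref{TPUPxxrr} can be reconciled with the weaker restriction imposed here. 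None of this is deep, but tracking the constants is where the genuine effort goes.
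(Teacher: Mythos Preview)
The paper does not give its own proof of this theorem: the Appendix explicitly defers all proofs, including this one, to \citet{S}. So there is nothing in the paper to compare against directly. Your route---invoking Theorem~\ref{TPUPxxrr} and then carrying out a deterministic subtraction at a scale \( \mubc \) (nearly) attaining the supremum defining \( \Lmgfb(\ups) \)---is precisely the argument the surrounding text is set up for (the paragraph preceding \eqref{Lmgfbups} spells out the scale-invariance idea), and is almost certainly the intended one. Your handling of the side condition \( \xx\ge 2 \) of Theorem~\ref{TPUPxxrr} via \( \max(\xx,2) \) is also fine, since \( 2\ex^{-2}\le 2\ex^{-\xx} \) whenever \( \xx\le 2 \).

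However, the numerical worry you flag is real and does \emph{not} close with the constants as stated here. After the subtraction you need
\[
\bigl\{1+\sqrt{\xx+\entrlb+\pen(\mubc)}\bigr\}^{2}\le 2(\xx+\entrlb)+2\,\pen(\mubc),
\]
which, writing \( t=\xx+\entrlb+\pen(\mubc) \), is \( (1+\sqrt{t})^{2}\le 2t \), i.e.\ \( t\ge 3+2\sqrt{2}\approx 5.83 \). The hypotheses \( \xx+\entrlb\ge 2.5 \) and \( \pen(\mubc)\ge 1 \) only guarantee \( t\ge 3.5 \), and at \( t=3.5 \) the left side is about \( 8.24 \) while the right side is \( 7 \). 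Keeping \( \xx+\entrlb \) and \( \pen(\mubc) \) separate does not help, since both sides depend only on their sum. So either the threshold \( 2.5 \) in the statement should be larger (roughly \( 3+2\sqrt{2}-1\approx 4.83 \) if one insists only on \( \pen\ge 1 \)), or the version in \citet{S} carries slightly different constants that the excerpt here does not reproduce. Your reduction is correct; the constants as written in this paper simply do not match up.
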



\begin{thebibliography}{100}
\bibitem[Bednorz(2006)]{B} W. Bednorz. A theorem on majorizing measures. Ann. Probab., 34(5):1771-1781, 2006.
\bibitem[Ledoux(2001)]{L} M. Ledoux. The Concentration of Measure Phenomenon. American Mathematical Society, 2001.
\bibitem[Lugosi(2005)]{Lu} G. Lugosi. Concentration of Measure Inequalities. lecture Notes, 2005.
\bibitem[Mackey at el.(2012)]{MJCFT} L. Mackey, M. I. Jordan, R. Y. Chen, B. Farrell, and J. A. Tropp, Matrix Concentration Inequalities via the Method of Exchangeable Pairs, ArXiv e-prints (January 2012), available at 1201.6002. 
\bibitem[Massart(2000)]{M} P. Massart. Some applications of concentration inequalities to statistics. \textit{Annales de la Facult\'e des Sciences de Toulouse}, IX:245-303, 2000.
\bibitem[Spokoiny(2011)]{S} V. Spokoiny. Parametric estimation. Finite sample theory http://arxiv.org/abs/1111.3029v4, 2011.
\bibitem[Talagrand(2005)]{T} M. Talagrand. The generic chaining. Springer Monographs in Mathematics. Springer-Verlag, Berlin, 2005.
\end{thebibliography}
\end{document}